\newcommand{\be}{\begin{equation}}
\newcommand{\ee}{\end{equation}}
\newcommand{\ef}[1]{\, #1}
\newcommand\rd{{\mathrm d}}
\newcommand\fois{\mathord{\cdot}}
\newcommand\dd{{\text{\textup{d}}}}
\newcommand\Imag{{\text{\textup{Im}}}}
\newcommand\caA{{\mathcal A}}
\newcommand\caJ{{\mathcal J}}
\newcommand\caP{{\mathcal P}}
\newcommand\gone{{ \mathchoice {1\mskip-4mu\mathrm{l} } {1\mskip-4mu\mathrm{l} }{1\mskip-4.5mu\mathrm{l} } {1\mskip-5mu\mathrm{l}} }}
\newcommand\gK{{\mathbb K}}
\newcommand\gC{{\mathbb C}}
\newcommand\gN{{\mathbb N}}
\newtheorem{Theorem}{Theorem}[section]
\newtheorem{theorem}[Theorem]{Theorem}
\newtheorem{proposition}[Theorem]{Proposition}
\newtheorem{lemma}[Theorem]{Lemma}
\newtheorem{conj}[Theorem]{Conjecture}
\newtheorem{remark}[Theorem]{Remark}
\newtheorem{definition}[Theorem]{Definition}
\newcommand{\tr}{\mathrm{Tr}} % trace
\newcommand{\beqa}{\begin{eqnarray}}
\newcommand{\eeqa}{\end{eqnarray}}
\newcommand{\omi}[1]{\buildrel { \buildrel{#1}\over{\vee} } \over .}
\date{}
\author{A. de Goursac\thanks{{axelmg@melix.net}},
   A. Sportiello\thanks{andrea.sportiello@lipn.univ-paris13.fr}
  and A. Tanasa\thanks{adrian.tanasa@lipn.univ-paris13.fr}}
\title
\begin{document}
\maketitle
\begin{abstract}
The Jacobian Conjecture states that any locally invertible polynomial system in $\gC^n$ is globally invertible with polynomial inverse. C. W.  Bass {\it et. al.} (1982) proved a reduction theorem stating that the conjecture is true for any degree of the polynomial system if it is true in degree three. 
This degree reduction is obtained with the price of
increasing the dimension $n$.

We prove here a theorem concerning partial elimination of variables, which implies a reduction of the generic case to the quadratic one. The price to pay is the introduction of a supplementary parameter $0 \leq n' \leq n$,
parameter which represents the dimension of a linear subspace where some particular conditions on the system must hold.

We first give a purely algebraic proof of this reduction result 
and we then expose a distinct proof, in a Quantum Field Theoretical formulation,  using the {\it intermediate field method}.
\end{abstract}

%%%%%%%%%%%%%%%%%%%%%%%%%%%%%%%%%%%%%%%%%%%%%%%%%%%%%%%
\section{Introduction}
\label{sec:in}

The \emph{Jacobian conjecture} has been formulated in \cite{Keller},
as a strikingly simple and natural conjecture concerning the global
invertibility of polynomial systems. Later on, it also appeared
connected to questions in non-commutative algebra, in particular the
conjecture has been shown to be stably equivalent to the Dixmier
Conjecture (see \cite{Belov:2008}), which concerns endomorphisms of
the Weyl algebra. Despite several efforts, and various promising
partial results, it remains unsolved.  An introduction to the problem,
the context, and the state of the art up to 1982, can be found in the
paper
%Bass, Connell and Wright~
\cite{BassCW}, which provides both a clear review, and among the most
relevant advances on the problem.

%Let $\g$ be an algebraically closed field of characteristic
%zero. Then 
The function $F: \gC^n \to \gC^n$ is said to be a 
\emph{polynomial system} if all the coordinate functions $F_j$'s are polynomials. Let us
call $\caP_n$ the set of such functions. For a function $F$, define
\be
\label{def.jac}
J_F(z) = 
\left(
\frac{\rd}{\rd z_i} F_j(z) \right)_{1 \leq i,j \leq n},
\ee
the corresponding Jacobian matrix. Then $\det J_F(z)$ is itself a
polynomial, and it is nowhere zero iff it is a non-zero constant.
Local invertibility in $y=F(z)$ is related to the invertibility of
$J_F(z)$ as a matrix, and thus to the vanishing of its
determinant. Depending from the space of functions under analysis,
local invertibility may or may not be sufficient to imply global
invertibility. The question here is what is the case, for the space
$\caP_n$ of polynomial systems over an algebraically-closed field of
characteristic zero (such as $\mathbb{C}$). For this reason, we define
the two subspaces of~$\caP_n$
\begin{definition}
\begin{align}
\caJ_n^{\rm lin}
&:= \{F\in \caP_n \,|\, \det J_F(z) = c \in \gC^{\times}\},\\
\caJ_n
&:=\{F\in \caP_n\, |\, F {\text{ is invertible}}\}.
\end{align}
\end{definition}
For the questions at hand, it will often be sufficient to analyse the
subset of $\caJ^{\rm lin}$ such that $\det J_F(z) = 1$.

More precisely w.r.t.\ what anticipated above, one can see, e.g., from
\cite[Thm.\ 2.1, pag.\ 294]{BassCW}, that $F\in \caJ_n^{\rm lin}$ is a
necessary condition for the invertibility of $F$, and, if $F$ is
invertible, the (set theoretic) inverse is automatically polynomial
and unique. The question is whether the condition on the Jacobian is
also sufficient, i.e.
\begin{conj}[Jacobian Conjecture, \cite{Keller}]
\label{conj.jc}
\be
\caJ_n^{\rm lin} = \caJ_n
\qquad
\forall n
\ef.
\ee
\end{conj}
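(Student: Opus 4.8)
The statement to be established is the equality $\caJ_n^{\rm lin}=\caJ_n$ for every $n$. Since the inclusion $\caJ_n\subseteq\caJ_n^{\rm lin}$ holds unconditionally (invertibility forces $\det J_F$ to be a nonzero constant, and, by \cite[Thm.\ 2.1, pag.\ 294]{BassCW}, the inverse is then automatically polynomial and unique), the entire content lies in the reverse inclusion $\caJ_n^{\rm lin}\subseteq\caJ_n$. I stress at the outset that this is the Jacobian Conjecture itself, a famously open problem; what follows is therefore a \emph{strategy} whose decisive step I do not expect to be able to close, rather than a complete argument.

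The plan is to combine the known structural reductions with a combinatorial analysis of the formal inverse. First I would invoke the classical degree and homogeneity reductions (Bass--Connell--Wright, and the refinement to the quadratic case announced in the present paper) to replace an arbitrary $F\in\caJ_n^{\rm lin}$ by a map of the normalised form $F=X-H$, with $H$ homogeneous of low degree (cubic in the Bass {\it et al.}\ normalisation, quadratic after this paper's reduction) and with $J_H$ nilpotent; in the homogeneous case the condition $\det J_F=\det(I-J_H)=1$ is precisely equivalent to nilpotency of $J_H$. Second, for such $F$ the formal power-series inverse $G$ is the unique solution of the fixed-point equation $G=X+H(G)$, which can be solved by iteration and reorganised as a weighted sum over rooted trees, each internal vertex carrying a copy of $H$ and each edge a contraction of the corresponding Jacobian indices. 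Polynomiality of $G$ -- equivalently, membership of $F$ in $\caJ_n$ -- is then equivalent to the assertion that this tree expansion \emph{truncates}: beyond some bounded height the weighted contributions must cancel or vanish identically.

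The hard part, and the reason the conjecture is still open, is exactly this truncation. Nilpotency of $J_H$ gives $J_H^{\,n}=0$ as a matrix identity, which controls \emph{linear} iteration but not the genuinely nonlinear tree sum, in which products of many distinct copies of $H$ evaluated along a branching structure occur; the cancellations required are global and appear to demand an identity far stronger than pointwise nilpotency. I expect the only viable source of new input to be the intermediate-field / Quantum-Field-Theoretic reformulation advertised in the abstract: rewriting the tree sum as a Gaussian (or fermionic) integral may expose symmetries and Ward-type identities that enforce the cancellations organically. Converting such a reformulation into a proof that the expansion terminates for all $n$ is, however, precisely the step I do not see how to complete, and carrying it out would amount to a solution of Conjecture~\ref{conj.jc} itself.
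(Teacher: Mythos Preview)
You have correctly identified that this statement is the Jacobian Conjecture itself, which the paper presents as an open \emph{conjecture} (labelled \texttt{conj}) and does not claim to prove. There is therefore no proof in the paper to compare your attempt against; the paper's contribution is the reduction Theorem~\ref{lem-main}, not a resolution of Conjecture~\ref{conj.jc}.

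Your write-up is honest about this: you explicitly flag that the decisive truncation step is the open part and that you do not know how to close it. As a strategic outline it is reasonable and in fact aligns with the paper's own perspective (the tree expansion of the formal inverse appears in Theorem~\ref{thm-inv}, the QFT/intermediate-field reformulation is precisely the content of Section~3, and the hope that Wang-type arguments might extend is voiced in the concluding remarks). But since neither you nor the paper supplies the missing cancellation argument, what you have written is a survey of the landscape rather than a proof, and should not be presented as one.
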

Define the \emph{total degree} of $F$, $\deg(F)$, as $\max_j
\deg(F_j(z))$, and introduce the subspaces
\begin{align}
\caP_{n,d} &= \{ F \in \caP_n \;|\; \deg(F) \leq d \}
\ef;
\end{align}
and similarly for $\caJ$ and $\caJ^{\rm lin}$. We mention two positive
results on the conjecture. First, a theorem for the quadratic case
($d=2$), established first in \cite{Wang}, and then, through a much
simpler proof, in \cite{Oda} (see also \cite[Lemma 3.5]{Wright2} and
\cite[Thm.\ 2.4, pag.\ 298]{BassCW}).
\begin{theorem}[\cite{Wang}]
\label{thm.red2}
\be
\caJ_{n,2}^{\rm lin} = \caJ_{n,2}
\qquad
\forall n
\ef.
\ee
\end{theorem}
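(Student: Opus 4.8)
The plan is to put $F$ into the normal form $F(z)=z+H(z)$ with $H$ homogeneous of degree two, to observe that the quadratic structure collapses the equation $F(a)=F(b)$ to a \emph{linear} system governed by a nilpotent matrix, to deduce injectivity, and finally to invoke the standard fact that an injective polynomial endomorphism of $\gC^n$ whose Jacobian determinant is a nonzero constant is bijective with polynomial inverse.

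First I would normalize. Since $\det J_F\equiv c\in\gC^\times$, the matrix $J_F(0)$ is invertible, and replacing $F$ by $G(z):=J_F(0)^{-1}\big(F(z)-F(0)\big)$ --- an operation preserving the degree (so $G\in\caP_{n,2}$), the constancy of $\det J_G$, and invertibility together with polynomiality of the inverse --- one may assume $F(0)=0$, $J_F(0)=\gone$, and hence $\det J_F\equiv 1$. Then $F(z)=z+H(z)$ with $H$ homogeneous of degree $2$, and I write $H(z)=B(z,z)$ for the unique symmetric bilinear map $B\colon\gC^n\times\gC^n\to\gC^n$. The entries of $J_H$ are linear forms, so $J_H(tz)=t\,J_H(z)$ and $\det\!\big(\gone+t\,J_H(z)\big)=\det J_F(tz)\equiv 1$ for all $t\in\gC$ and all $z$; equivalently, $J_H(z)$ is nilpotent for every $z$.

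Next comes injectivity, which is the crux. Suppose $F(a)=F(b)$. Then $a-b=-\big(H(a)-H(b)\big)=-\big(B(a,a)-B(b,b)\big)=-B(a-b,a+b)$ by bilinearity and symmetry of $B$. Since $J_H(w)v=2B(w,v)$ and $B$ is symmetric, $B(a-b,a+b)=\tfrac12 J_H(a+b)(a-b)$, so the relation reads $\big(\gone+\tfrac12 J_H(a+b)\big)(a-b)=0$. But $\det\!\big(\gone+\tfrac12 J_H(a+b)\big)=\det J_F\!\big(\tfrac12(a+b)\big)=1\neq0$ (equivalently, $\gone+\tfrac12 J_H(a+b)$ is invertible because $J_H(a+b)$ is nilpotent), hence $a=b$ and $F$ is injective.

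Finally I would upgrade injectivity to the full claim: an injective polynomial self-map of $\gC^n$ is automatically surjective (Ax--Grothendieck), hence bijective, and --- as recalled in the excerpt from \cite[Thm.\ 2.1]{BassCW}, using $\det J_F\in\gC^\times$ --- its set-theoretic inverse is then automatically polynomial. Thus $F\in\caJ_{n,2}$; since the reverse inclusion $\caJ_{n,2}\subseteq\caJ_{n,2}^{\rm lin}$ is immediate (invertibility with polynomial inverse forces $\det J_F\in\gC^\times$), we conclude $\caJ_{n,2}^{\rm lin}=\caJ_{n,2}$. I expect the only genuinely non-elementary point to be this last step --- the implication \emph{injective} $\Rightarrow$ \emph{bijective with polynomial inverse} --- since everything before it is linear algebra plus the symmetric-bilinear-form bookkeeping; the real content lies in noticing that the normalization reduces the problem to that bookkeeping.
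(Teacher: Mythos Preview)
The paper does not itself prove this theorem: it only states it and points to \cite{Wang}, \cite{Oda}, \cite{Wright2} and \cite[Thm.~2.4]{BassCW} for the argument. So there is nothing in the paper to compare against.

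Your proposal is correct and is essentially the standard ``simpler'' proof alluded to in those references: normalize to $F(z)=z+H(z)$ with $H$ homogeneous of degree two, use linearity of $J_H$ in $z$ together with $\det J_F\equiv 1$ to see that $J_H(z)$ is nilpotent for every $z$, and then collapse $F(a)=F(b)$ via the symmetric--bilinear identity $H(a)-H(b)=B(a+b,a-b)$ to the linear equation $\big(\gone+\tfrac12 J_H(a+b)\big)(a-b)=0$, whose coefficient matrix has determinant $\det J_F\!\big(\tfrac12(a+b)\big)=1$. Injectivity then upgrades to a polynomial automorphism via Ax--Grothendieck plus \cite[Thm.~2.1]{BassCW}, exactly as you say.

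One small remark on conventions: in this paper (see \eqref{def.jac}) the Jacobian is defined with $(J_F)_{ij}=\partial_i F_j$, so your identity $J_H(w)v=2B(w,v)$ literally holds for $J_H(w)^{\!\top}$ rather than $J_H(w)$. This is harmless, since the conclusion only uses $\det\big(\gone+\tfrac12 J_H(a+b)\big)=1$, which is unaffected by transposition; but if you want to match the paper's convention exactly you should either transpose or phrase the invertibility directly via $\det J_F\!\big(\tfrac12(a+b)\big)=1$ without passing through the $B$-identity for $J_H$. You also correctly identified that the only non-elementary ingredient is the injective\,$\Rightarrow$\,bijective step.
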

Then, a reduction theorem, from the general case to the cubic case, established by
Bass, Connell and Wright \cite[Sec.~II]{BassCW}).
\begin{theorem}[\cite{BassCW}]
\label{thm.red3}
\be
\caJ_{n,3}^{\rm lin} = \caJ_{n,3}
\quad
\forall n
\qquad
\Longrightarrow
\qquad
\caJ_{n}^{\rm lin} = \caJ_{n}
\quad
\forall n
\ef.
\ee
\end{theorem}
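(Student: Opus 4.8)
The plan is a dimension‑raising reduction. Assuming $\caJ^{\rm lin}_{N,3}=\caJ_{N,3}$ for all $N$, I show that every $F\in\caJ^{\rm lin}_{n}$ is invertible: from $F\in\caJ^{\rm lin}_{n}$ of degree $d$ I construct $G\in\caJ^{\rm lin}_{N,3}$ in some larger dimension $N=N(n,d)$ with the property that $G$ invertible $\Rightarrow$ $F$ invertible; the hypothesis then gives $G\in\caJ_{N,3}$, hence $F\in\caJ_n$, and since $F,n$ are arbitrary (and $\caJ_n\subseteq\caJ^{\rm lin}_n$ always) this yields $\caJ^{\rm lin}_n=\caJ_n$ for all $n$. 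The first step is a harmless normalisation: composing $F$ on the left with the affine automorphism $z\mapsto J_F(0)^{-1}(z-F(0))$ (legitimate because $\det J_F\in\gC^{\times}$ makes $J_F(0)$ invertible, and harmless because it lies in $\caJ_n$, so alters neither degree nor invertibility of $F$) lets me assume $F(0)=0$, $J_F(0)=\gone$, i.e.\ $F=X-H$ with $H$ of order $\geq2$ and degree $d$, the condition being $\det(\gone-J_H)=1$.

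Following \cite{BassCW} I next adjoin one extra variable and pass to the case where $H$ is homogeneous of degree exactly $d$. The payoff is a clean reformulation of the Jacobian condition: replacing $X$ by $sX$ in $\det(\gone-J_H(X))=1$ and using $J_H(sX)=s^{d-1}J_H(X)$ gives $\det(\gone-s^{d-1}J_H(X))=1$, hence — since $z\mapsto z^{d-1}$ is onto $\gC$ — $\det(\gone-rJ_H(X))=1$ for all $r\in\gC$; reading this as a polynomial identity in $r$ forces every coefficient past the constant term to vanish, i.e.\ $J_H$ is nilpotent over $\gC[X]$ (and conversely nilpotency returns $\det J_F=1$). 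Thus in the homogeneous world "$F=X-H\in\caJ^{\rm lin}$" is literally "$J_H$ nilpotent", a condition far more robust under the construction to come.

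The core step is degree reduction. Given $F=X-H$ with $H$ homogeneous of degree $d\geq4$ and $J_H$ nilpotent, I manufacture $G=X-\bar H$ — now $X$ denoting the enlarged tuple of variables — with $\bar H$ homogeneous of strictly smaller degree and $J_{\bar H}$ again nilpotent; iterating finitely often brings the degree down to $\leq3$. The new variables $W$ are place‑holders for the intermediate polynomial expressions that occur when $H$ is factored into lower‑degree pieces and those pieces are built up multiplicatively one step at a time; one substitutes the $W$'s into the equation for $X$ and adjoins the defining equations $W=q(X)$. Transfer of invertibility is then automatic: $\{W=q(X)\}$ is exactly the zero locus of the $W$‑block of $G$, and there $G$ equals $(F,0)$, so if $b\notin F(\gC^n)$ then $(b,0)\notin G(\gC^N)$ (because $G(X,W)=(b,0)$ forces $(X,W)$ onto that locus), while $F(a)=F(a')$ with $a\neq a'$ gives $G(a,q(a))=G(a',q(a'))$ at distinct points. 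Since a set‑theoretic inverse of $G\in\caJ^{\rm lin}$ is automatically a polynomial automorphism (as recalled after Theorem~2.1 of \cite{BassCW}), $G$ invertible forces $F$ invertible, which together with the hypothesis completes the argument.

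The only non‑routine point is keeping $\det J_G$ constant. Lowering the degree by substituting fresh variables for high‑degree monomials of $H$ is cheap — one can even drive the degree down to $2$ this way — and the resulting $G$ does satisfy $\det J_G=1$ along the subvariety $\{W=q(X)\}$, which is the only locus of $\gC^N$ meeting its image; but this alone does \emph{not} make $\det J_G$ constant on all of $\gC^N$, since the relevant Schur complement equals $J_F(X)$ only on that subvariety and depends genuinely on $W$ off it. Moreover a degree reduction to degree $\leq2$ preserving Jacobian constancy would, via Theorem~\ref{thm.red2}, prove the Jacobian Conjecture outright; so degree $3$ is the best one can hope for, and attaining even that requires the actual construction of \cite[Sec.~II]{BassCW}: the homogenisation of the second step (which makes $J_H$ nilpotent) together with a triangular arrangement of the equations for $W$ engineered so that $J_{\bar H}$ stays nilpotent everywhere, not merely along $\{W=q(X)\}$. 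That engineering is the crux; the rest is bookkeeping.
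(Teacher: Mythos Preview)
The paper does not give its own proof of this theorem: it is stated as a result of Bass, Connell and Wright and attributed to \cite[Sec.~II]{BassCW} without further argument. Your proposal is a faithful and correct outline of the original BCW reduction (normalisation to $F=X-H$, homogenisation so that $\det J_F=1$ becomes nilpotency of $J_H$, iterated degree lowering via auxiliary variables with triangular structure, and invertibility transfer along the subvariety $\{W=q(X)\}$), and you correctly isolate the one genuinely delicate point --- that naive substitution only yields $\det J_G=1$ on the subvariety rather than globally, so the specific BCW construction keeping $J_{\bar H}$ nilpotent everywhere is essential and cannot be pushed below degree~$3$. Since the paper itself simply cites \cite{BassCW}, there is nothing further to compare; your sketch is accurate and more informative than what appears in the text, though of course it remains a sketch in that the actual ``engineering'' is, as you say, deferred to \cite[Sec.~II]{BassCW}.
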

The proof of the above theorem involves manipulations under which the
dimension $n$ of the system is increased, thus this proof does
\emph{not} imply the corresponding statement without the ``$\forall
n$'' quantifier, i.e.\ that
$\caJ_{n,3}^{\rm lin} = \caJ_{n,3} \ \Rightarrow
\ \caJ_{n}^{\rm lin} = \caJ_{n} $.

%Our quickest way for proving the Jacobian Conjecture,
%Conj.~\ref{conj.jc}, is thus by establishing 
%%
%that $\caJ_{n,2}^{\rm lin} = \caJ_{n,2}$ for all $n$ implies
%$\caJ_{n,3,1}^{\rm lin} = \caJ_{n,3,1}$ for all $n$.
%% the following
%% \begin{lemma}
%% \be
%% \caJ_{n,2}^{\rm lin} = \caJ_{n,2}
%% \quad
%% \forall n
%% \qquad
%% \Longrightarrow
%% \qquad
%% \caJ_{n,3,1}^{\rm lin} = \caJ_{n,3,1}
%% \quad
%% \forall n
%% \ef;
%% \ee
%% \end{lemma}
%With a smaller further notational effort we can avoid the use
%of Theorem \ref{thm.red3}, and prove that, for all $d \geq 2$,
%$\caJ_{n,d}^{\rm lin} = \caJ_{n,d}$ for all $n$ implies
%$\caJ_{n,d+1}^{\rm lin} = \caJ_{n,d+1}$ for all $n$.
%
%On the contrary, we do not know how to avoid the use of Theorem
%\ref{thm.red2} (which is in fact easy to prove), and for this reason
%we recall it in detail in Section ???????.

%The statements above are derived as special cases of what we consider
%to be the crucial concept, namely 

\medskip

Our result is a reduction theorem, in the line of the one
above, that tries to `fill the gap' between the cases of degree two
and three.  However, for this, we have to introduce an adaptation of the
statement of the Jacobian conjecture with one more parameter.

For $n'\leq n$ and $F\in\caP_{n,d}$, we write $z=(z_1,z_2)$ and
$F=(F_1,F_2)$ to distinguish components in the two subspaces
$\gC^{n'}\times\gC^{n-n'} \equiv \gC^{n}$.
% ($F_1$ referring to the first $n'$ variables and $F_2$ referring to
% the following $n-n'$ variables). 
We set $R(z_2;z_1)$ to be equal to $F_2(z_1,z_2)$,
emphasizing that, in $R$, we consider $z_2$ as the variables in a
polynomial system, and $z_1$ as parameters. The invertibility of $R$, denoted by $R(\fois;z_1)\in\caJ_{n-n',d}$, for a fixed $z_1$, means that there exists a polynomial $R^{-1}$ with variables $y_2\in\gC^{n-n'}$, and depending on $z_1$, such that
\begin{equation*}
\forall z_2\in\gC^{n-n'},\quad R^{-1}(R(z_2;z_1);z_1)=z_2.
\end{equation*}
We are now ready to define the main objects involved in the reduction theorem of this paper.
\begin{definition}
\label{def-jjlin}
We define the
subspaces of $\caP_{n,d}$
\begin{align*}
\caJ_{n,d;n'}  :=&\{F\in\caP_{n,d}\,|\,R(\fois;z_1)\in\caJ_{n-n',d}\ \forall z_1\in\gC^{n'}\\
&{\text{ and }}
% F\text{~is invertible on~}
F^{-1}\text{~restricted to~}
\gC^{n'}\times\{0\}\text{ is in }\caP_{n'}
\}
% the (formal) inverse}\, F^{-1}{\text{ satisfies }}
% F^{-1}_{|\gC^{n'}\times\{0\}}\in \caP_{n'}
\\
\caJ^{\rm lin}_{n,d;n'}  :=&\{F\in\caP_{n,d}\,|\,
R(\fois;z_1)\in\caJ_{n-n',d}\ \forall z_1\in\gC^{n'}\\
&\text{ and }(\det J_F)(z_1,R^{-1}(0,z_1))=c \in \mathbb{C}^{\times},\;\forall z_1\in\gC^{n'}\}
\end{align*}
\end{definition}
The first definition is a clear and natural generalisation of
$\caJ_{n,d}$, which corresponds to $n'=n$. The second one evaluates the Jacobian on a suitable
algebraic variety, candidate image of $\gC^{n'}\times\{0\}$ under 
$F^{-1}$
% of appropriate dimension $n'$, whose choice will become
(the reason for this choice will become clear only in the following, see in particular Section \ref{sec-example}).
% Note that the reasons for whom we use such definitions will become
% clear once we will introduce the intermediate field method (see the
% following section). 
As a consequence,
% of the choice of manifold, 
we have
that $\caJ^{\rm lin}_{n,d;n'}\subseteq \caJ_{n,d;n'}$, similarly to
the original $n'=n$ case. 

Apparently, the most natural and intrinsic generalization would have been to
consider an arbitrary linear subspace of dimension $n-n'$, on which $z$
shall vanish, instead of the last $n-n'$ variables.  However, it
will be notationally simpler to restrict to our choice, and cause
no loss of generality for the problem at hand, which is clearly
$\mathrm{GL}(n,\mathbb{C})$-invariant.

%% Without any loss of generality, we have chosen the
%% first $n'$ variables to be non-vanishing, i.e. $\gC^{n'}\times\{0\}$,
%% in $\gC^n$ in the previous definition but we could have just rewritten
%% that there exists a subspace $V\subset \gC^n$ of dimension $n'$ with
%% the condition $F^{-1}_{|V}\in \caP_{n'}$ or $(\det J_F)_{|V}=1$. 

Let us now state our reduction theorem:
\begin{Theorem}
\label{lem-main}
For $n\in\gN$ and $d\geq 3$, there exists an injective map 
$\Phi: \caP_{n,d}\to \caP_{n(n+1),d-1}$ 
%$\Phi: F\in \caP_{n,d}\mapsto \tilde F=\Phi(F)\in\caP_{n(n+1),d-1}$ 
satisfying 
\begin{align}
\Phi(\caJ^{\rm lin}_{n,d})
%\simeq
&\equiv
\caJ^{\rm lin}_{n(n+1), d-1;n}\cap \Imag(\Phi)
\ef;
&
% \text{and }
\Phi(\caJ_{n,d})
%\simeq
&\equiv
\caJ_{n(n+1), d-1;n}\cap\Imag(\Phi)
\ef,
\end{align}
where $\Imag(\Phi)=\Phi(\caP_{n,d})$.
\end{Theorem}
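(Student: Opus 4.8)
The key idea is the classical \emph{degree reduction trick}: given $F\in\caP_{n,d}$ with $F(z)=z-H(z)$ (after normalising the linear part, which we may do since everything is $\mathrm{GL}(n,\mathbb{C})$-invariant and we only care about degrees), introduce $n^2$ auxiliary variables $w=(w_{ij})_{1\le i,j\le n}$ which are meant to represent the degree-$\geq 1$ homogeneous pieces of the intermediate monomials appearing when one factors $H$ through a product of lower-degree maps. Concretely, I would write each coordinate $H_j$ as a sum of terms of degree between $2$ and $d$, and use the new variables $w_{ij}$ to ``store'' the value of the degree-$(d-1)$ part, so that on the slice $w=0$ one recovers exactly $F$, while the full system $\Phi(F)$ on $\gC^{n(n+1)}=\gC^n_z\times\gC^{n^2}_w$ has total degree only $d-1$. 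The map $\Phi$ is then manifestly polynomial, and injectivity is immediate because $\Phi(F)$ restricted to $w=0$ in the first $n$ coordinates returns $F$; so the nontrivial content is the two set-theoretic identities.

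First I would unwind the definition of $\caJ^{\rm lin}_{n(n+1),d-1;n'}$ with $n'=n$: here $z_1=z\in\gC^n$ plays the role of the ``parameter'' block and $z_2=w\in\gC^{n^2}$ the block in which one must be able to invert $R(w;z)$. By construction $R(\fois;z)$ is affine-linear in $w$ with invertible linear part (essentially triangular, coming from the way the $w_{ij}$ were introduced), so $R(\fois;z)\in\caJ_{n^2,d-1}$ automatically for every $z$, and $R^{-1}(0;z)$ is an explicit polynomial in $z$ — call it $w_\star(z)$. Then I would compute $\det J_{\Phi(F)}$ at the point $(z,w_\star(z))$ by a block-triangular argument: the $w$-$w$ block is the (constant, nonzero) determinant of the linear part of $R$, and the $z$-$z$ Schur complement is precisely $\det J_F(z)$ up to that same constant. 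This gives the equivalence $\Phi(F)\in\caJ^{\rm lin}_{n(n+1),d-1;n}\iff \det J_F(z)\in\gC^\times$, i.e.\ the first identity; the inclusion $\Phi(\caJ^{\rm lin}_{n,d})\subseteq \caJ^{\rm lin}_{n(n+1),d-1;n}\cap\Imag(\Phi)$ and its converse both follow since $\Imag(\Phi)$ pins down that the system has the required special form.

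For the second identity I would show that global invertibility of $\Phi(F)$ (with polynomial inverse restricted to $\gC^n\times\{0\}$, as in Definition~\ref{def-jjlin}) is equivalent to global invertibility of $F$. One direction: if $F\in\caJ_{n,d}$, build the inverse of $\Phi(F)$ explicitly — solve first the affine-linear-in-$w$ system $R$ (polynomially, with coefficients polynomial in the $z$-image variables, using that $F^{-1}$ is polynomial), then substitute into the remaining $n$ equations which, after this substitution, become $y\mapsto F^{-1}(y)$; one checks the restriction to the slice lands in $\caP_{n'}=\caP_n$. Conversely, if $\Phi(F)$ is invertible in the sense of the definition, restrict the inverse to the image of $\gC^n\times\{0\}$ to recover a polynomial inverse of $F$.

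\textbf{Main obstacle.} The routine part is the block-determinant computation; the delicate part is engineering $\Phi$ so that \emph{simultaneously} (i) the total degree genuinely drops to $d-1$ and not merely to $d$, (ii) $R(\fois;z)$ is honestly invertible with polynomial inverse for \emph{every} $z\in\gC^n$ and not just generically, and (iii) the ``$F^{-1}$ restricted to $\gC^{n'}\times\{0\}$ is polynomial'' clause is met rather than merely rational. Point (i) forces a careful bookkeeping of which homogeneous pieces get absorbed into which $w_{ij}$ (a single layer of auxiliary variables lowers the degree by one, which is exactly why the theorem reduces $d$ to $d-1$ and not all the way to $2$ in one shot — iterating would reintroduce the ``$\forall n$'' issue, hence the role of the parameter $n'$), and point (ii) is really the reason the parameter $n'$ and the variety $\{(z_1,R^{-1}(0;z_1))\}$ had to be built into Definition~\ref{def-jjlin} in the first place, so that the Jacobian is evaluated exactly where the reduction is well-behaved.
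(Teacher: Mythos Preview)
Your plan is essentially the paper's own approach: introduce $n^2$ auxiliary variables so that the second block $R(w;z)$ is affine-linear in $w$ with identity linear part, then use a Schur-complement computation to relate $\det J_{\Phi(F)}(z,R^{-1}(0;z))$ to $\det J_F(z)$ (this is the paper's Lemma~\ref{lem.pelin}), and an explicit two-step inversion to relate the invertibility conditions (this is Lemma~\ref{lem.peinj}). The paper's concrete $\Phi$ sets $\tilde F^{(1)}_i=\sum_j z^{(2)}_{ij}z^{(1)}_j$ and $\tilde F^{(2)}_{ij}=z^{(2)}_{ij}-\sum_c c^{-1}\partial_{z^{(1)}_j}(F_c)_i$, so that Euler's identity for homogeneous polynomials gives $H(z^{(1)};0)=F(z^{(1)})$; your sketch of ``storing the degree-$(d-1)$ part'' is compatible with a variant of this.

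One small slip: you claim injectivity because ``$\Phi(F)$ restricted to $w=0$ in the first $n$ coordinates returns $F$.'' In the paper's construction this is false --- $\tilde F^{(1)}(z,0)=0$ identically --- and in the natural variant where $\tilde F^{(1)}(z,0)=z-\sum_{c<d}H_c(z)$ it is still not $F$. What does recover $F$ is $H(z;0)=\tilde F^{(1)}(z,R^{-1}(0;z))$, i.e.\ one must substitute $w=R^{-1}(0;z)$, not $w=0$. Injectivity is then immediate from $H(\fois;0)=F$, or alternatively from reading off the coefficients of $F$ from $\tilde F^{(2)}$. This is a local fix, not a structural gap; the rest of your outline is sound and matches the paper.
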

Combining Theorem \ref{thm.red3} and the theorem above, the full
Jacobian Conjecture reduces to the question whether
\begin{equation*}
\caJ^{\rm lin}_{n(n+1),2;n}\cap\Imag(\Phi)=\caJ_{n(n+1),2;n}\cap\Imag(\Phi).
\end{equation*}
This question seems at a first stage as difficult as the original Jacobian conjecture. However, it involves only a quadratic degree, and this might simplify the resolution, in the light of Wang Theorem (Thm.~\ref{thm.red2}).

%% it suffices to prove the conjecture in
%% degree $d=3$ for any dimension $n$ to obtain the general case. Let
%% thus $F\in\caJ^{\rm lin}_{n,3}$. Using Theorem \ref{lem-main}, we can
%% construct $\tilde F=\Phi(F)\in\caJ^{\rm lin}_{n(n+1),2;n}$. If you
%% assume that $\caJ^{\rm lin}_{n(n+1),2;n}=\caJ_{n(n+1),2;n}$, then
%% using once again Theorem \ref{lem-main}, you would obtain that
%% $F\in\caJ_{n,3}$, i.e. the result of the Jacobian conjecture. We can
%% now 

It is also natural, at this point, to formulate a stronger version of
the Jacobian Conjecture
% that implies in a
% straightforward manner the Jacobian conjecture, as showed above.
\begin{conj}
\label{conj-main}
For all $n\geq n'\geq 0$, and all $d \geq 1$,
\be
\caJ^{\rm lin}_{n,d;n'}=\caJ_{n,d;n'}
\ef.
\ee
\end{conj}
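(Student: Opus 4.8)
The plan is to reduce the statement to the classical Jacobian Conjecture (its diagonal instance $n'=n$) by eliminating the last $n-n'$ variables. Write $F=(F_1,F_2)$ and $z=(z_1,z_2)$ as in Definition~\ref{def-jjlin}, and keep the common hypothesis $R(\fois;z_1)=F_2(z_1,\fois)\in\caJ_{n-n',d}$ for all $z_1\in\gC^{n'}$. First I would promote this fibrewise invertibility to a genuine \emph{partial inversion}: the auxiliary map $\widetilde F(z_1,z_2)=(z_1,F_2(z_1,z_2))$ is bijective (fibrewise bijective over the identity on the $z_1$-block) and \'etale, because $\det J_{\widetilde F}=\det(\partial_{z_2}F_2)$ equals, for each $z_1$, the nonzero $z_2$-independent Jacobian of the automorphism $R(\fois;z_1)$, so it is a nowhere-vanishing polynomial on $\gC^{n}$ and hence a nonzero constant $c_0\in\gC^\times$. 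A bijective \'etale polynomial endomorphism of $\gC^{n}$ is a polynomial automorphism, so $R^{-1}(y_2;z_1)$ is jointly polynomial and $g(z_1):=R^{-1}(0;z_1)\in\caP_{n'}$. I then define the \emph{effective system} $G:=F_1(\fois,g(\fois))\in\caP_{n'}$, obtained by solving $F_2=0$ for $z_2$.

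Next, differentiating $F_2(z_1,g(z_1))=0$ gives $J_g=-(\partial_{z_2}F_2)^{-1}\partial_{z_1}F_2$, so that $J_G=\partial_{z_1}F_1-\partial_{z_2}F_1(\partial_{z_2}F_2)^{-1}\partial_{z_1}F_2$ is the Schur complement of $\partial_{z_2}F_2$ in $J_F$. Evaluating along the variety $\{(z_1,g(z_1))\}$ yields the factorisation $(\det J_F)(z_1,g(z_1))=c_0\,(\det J_G)(z_1)$. As $c_0\neq 0$, the defining condition of $\caJ^{\rm lin}_{n,d;n'}$, namely that $(\det J_F)(z_1,R^{-1}(0,z_1))$ be a nonzero constant, is equivalent to $\det J_G$ being a nonzero constant, i.e.\ $G\in\caJ^{\rm lin}_{n'}$.

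For the invertibility side, solving $F(z)=(y_1,0)$ forces $z_2=g(z_1)$ and then $G(z_1)=y_1$, so $F^{-1}(y_1,0)=\bigl(G^{-1}(y_1),\,g(G^{-1}(y_1))\bigr)$; thus $F^{-1}$ restricted to $\gC^{n'}\times\{0\}$ is polynomial exactly when $G^{-1}\in\caP_{n'}$, i.e.\ $G\in\caJ_{n'}$. This gives the exact dictionary $F\in\caJ^{\rm lin}_{n,d;n'}\Leftrightarrow G\in\caJ^{\rm lin}_{n'}$ and $F\in\caJ_{n,d;n'}\Leftrightarrow G\in\caJ_{n'}$. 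One inclusion of the conjecture is then immediate from $\caJ_{n'}\subseteq\caJ^{\rm lin}_{n'}$, while the reverse inclusion is precisely the classical equality $\caJ^{\rm lin}_{n'}=\caJ_{n'}$ in dimension $n'$.

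The main obstacle is that this dictionary makes Conjecture~\ref{conj-main} \emph{equivalent} to the classical Jacobian Conjecture, so no formal argument can settle it outright; moreover the reduction is not a dimension reduction, since $n'$ may equal $n$. The one place where real leverage appears is degree: although $F$ has degree $d$, the eliminated inverse $g$ and hence $G$ may have much larger degree, so a fixed-degree instance reduces only to an uncontrolled-degree instance in dimension $n'\le n$. I would therefore not rely on this elimination alone, but combine it with the degree-lowering injection $\Phi$ of Theorem~\ref{lem-main}: iterating $\Phi$ drives the problem down to $d=2$, where the quadratic effective system should be attacked directly via Wang's Theorem (Theorem~\ref{thm.red2}). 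Showing that the degree-$2$ effective system meets the hypotheses of Theorem~\ref{thm.red2}, i.e.\ controlling $\deg G$ in the quadratic regime, is where I expect the genuine difficulty to lie.
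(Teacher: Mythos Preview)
This statement is a \emph{conjecture}, not a theorem: the paper does not prove it, but poses it as a strengthened form of the (open) Jacobian Conjecture. There is therefore no ``paper's own proof'' to compare against.

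Your reduction argument is nonetheless correct, and it is precisely the content of the paper's two partial-elimination lemmas (Lemmas~\ref{lem.pelin} and~\ref{lem.peinj}): your effective system $G(z_1)=F_1(z_1,R^{-1}(0;z_1))$ is exactly the paper's $H(\fois;0)$, and the Schur-complement identity you derive is the computation carried out in the proof of Lemma~\ref{lem.pelin}. These lemmas indeed give $F\in\caJ^{\rm lin}_{n,d;n'}\Leftrightarrow G\in\caJ^{\rm lin}_{n'}$ and $F\in\caJ_{n,d;n'}\Leftrightarrow G\in\caJ_{n'}$, so Conjecture~\ref{conj-main} is equivalent to the classical Jacobian Conjecture in dimension $n'$. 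You correctly recognise that this is an equivalence with an open problem, not a proof; the paper agrees, leaving Conjecture~\ref{conj-main} open and noting only that its instances $(n,d;n')=(m(m+1),2,m)$ restricted to $\Imag(\Phi)$ would already imply the full Jacobian Conjecture.

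Your closing idea---reducing to $d=2$ via $\Phi$ and then invoking Wang's theorem---is exactly the perspective the paper flags in its concluding remarks, and it is left there as a direction for future work, not a result. The obstacle you identify, that the eliminated system $G$ can have uncontrolled degree even when $F$ is quadratic, is the genuine difficulty, and nothing in the paper resolves it.
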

As we have seen, the original Conjecture \ref{conj.jc} follows
from the cases\\
$(n,d;n') \in \{(m(m+1),2,m)\}_{m \geq 0}$ of the
above conjecture, restricted to $\Imag(\Phi)$.

\medskip

As already announced above, this paper proves Theorem \ref{lem-main}. We found this restriction of degree originally in the Quantum Field Theory formulation of the Jacobian conjecture, by applying the intermediate field method. So, in this paper, we first prove the theorem algebraically in section \ref{sec.algproof} and then, we expose the equivalent proof using combinatorial Quantum Field Theory (QFT) in subsection \ref{subsec-redthm}.
%As briefly mentioned in the abstract, the proof will use
%``combinatorial quantum field theory'' (QFT) techniques, although a
%more formal, algebraic proof, would be easily derived from the one
%presented here (and will, in the long version of this paper). 
For
general references on QFT for combinatorists, we refer the interested
reader to \cite{malek-SLC} or \cite{adrian-SLC}, while a rederivation
of the QFT analogs of quantities pertinent here is done in subsection
\ref{subsec-eur}, at a heuristic level, and in subsection
\ref{subsec-inv}, more formally.

\medskip

For the sake of completeness, let us recall for the interested reader that various purely combinatorial approaches to the Jacobian Conjecture were given. Thus, the first paper of this series was the one of Zeilbelger \cite{zeil}, which proposes the Joyal method of combinatorial species as an appropriate tool to tackle the conjecture. His work has been followed by the one of Wright's \cite{wright}, which used trees to reformulate the conjecture and then by the one of Singer \cite{singer1}, which used rooted trees (see also \cite{w2} and \cite{singer2}).

%%%%%%%%%%%%%%%%%%%%%%%%%%%%%%%%%%%%%%%%%%%%%%%%%%%%%%%

\section{Algebraic proof of the reduction theorem}
\label{sec.algproof}

In this section we use algebraic methods to prove Theorem \ref{lem-main}. Let us first prove the following lemmas.

%F devient S
\begin{lemma}[Partial elimination, linearized version]
\label{lem.pelin}
Let $N=n_1+n_2$, and $S\in \caP_N$. Write $z=(z_1,z_2)$ for 
$z \in \gK^N = \gK^{n_1} \times \gK^{n_2}$ and so on.
%, in particular $F(z) = \big( F_1(z_1,z_2), F_2(z_1,z_2) \big)$. 
Call $R(z_2; z_1)= S_2(z_1,z_2)$ the system in
$\caP_{n_2}$, where $z_1$ coordinates are intended as
parameters. Assume by hypothesis that $R(\fois; z_1) \in \caJ_{n_2}$ for
all $z_1 \in \gK^{n_1}$.
%, so that $G^{-1}(y_2;z_1)$ exists. 
Define $H(z_1;y_2) = S_1(z_1,R^{-1}(y_2;z_1)) \in \caP_{n_1}$.
% (now $y_2$ coordinates are intended as parameters). 
We have
%\be
%F \in \caJ_n^{\rm lin}
%\qquad \textit{iff} \qquad
%H(\fois;y_2) \in \caJ_{n_1}^{\rm lin} 
%\quad \forall y_2 \in \gK^{n_2}
%\ef;
%\ee
%and
\be
S \in \caJ_{N;n_1}^{\rm lin}
\qquad \textit{iff} \qquad
H(\fois ;0) \in \caJ_{n_1}^{\rm lin} 
\ef.
\ee
\end{lemma}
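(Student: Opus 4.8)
The plan is to ``straighten'' $S$ by a triangular polynomial automorphism and then read off the relevant Jacobian determinant block-wise. Under the hypothesis, the polynomial system $\Psi\colon\gK^N\to\gK^N$, $\Psi(z_1,z_2)=(z_1,S_2(z_1,z_2))=(z_1,R(z_2;z_1))$, is a bijection, with set-theoretic inverse $\Psi^{-1}(z_1,y_2)=(z_1,R^{-1}(y_2;z_1))$. Since a bijective polynomial system has a polynomial inverse (cf.\ Section~\ref{sec:in} and \cite{BassCW}), $\Psi^{-1}$ is again a polynomial system; in particular $R^{-1}$ is polynomial in $(z_1,y_2)$ jointly, so $H(z_1;y_2)=S_1(z_1,R^{-1}(y_2;z_1))$ is polynomial in $(z_1,y_2)$ and $H(\fois;y_2)\in\caP_{n_1}$. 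The key feature of $\Psi$ is the factorisation
\be
(S\circ\Psi^{-1})(z_1,y_2)=\big(H(z_1;y_2),\,y_2\big),
\ee
which holds because $S_2(z_1,R^{-1}(y_2;z_1))=R(R^{-1}(y_2;z_1);z_1)=y_2$.

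Next I would differentiate this factorisation and use multiplicativity of the determinant (the transpose convention of \eqref{def.jac} merely reverses the order of the matrix factors, not the value of the determinant). The Jacobian of $S\circ\Psi^{-1}$ is block triangular with diagonal blocks $\partial_{z_1}H$ and $I_{n_2}$, while $J_{\Psi^{-1}}$ is block triangular with diagonal blocks $I_{n_1}$ and $\partial_{y_2}R^{-1}$; hence
\be
\det J_{H(\fois;y_2)}(z_1)=(\det J_S)\big(z_1,R^{-1}(y_2;z_1)\big)\cdot\det J_{R^{-1}(\fois;z_1)}(y_2),
\ee
an identity of polynomials in $(z_1,y_2)$. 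Specialising $y_2=0$, the left-hand side becomes $\det J_{H(\fois;0)}(z_1)$, the first factor on the right becomes exactly the quantity $(\det J_S)(z_1,R^{-1}(0;z_1))$ appearing in Definition~\ref{def-jjlin}, and the whole content of the lemma is then concentrated in understanding the scalar $\det J_{R^{-1}(\fois;z_1)}(0)$.

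To finish, I would show this scalar is a nonzero constant, independent of $z_1$. Since $R(\fois;z_1)\in\caJ_{n_2}$ with polynomial inverse $R^{-1}(\fois;z_1)$, differentiating $R^{-1}(\fois;z_1)\circ R(\fois;z_1)=\mathrm{id}$ shows $\det J_{R(\fois;z_1)}(z_2)$ is a unit in $\gK[z_2]$, hence a nonzero constant in $z_2$; as $\det J_R(z_2;z_1)=\det\big(\partial_{z_2}S_2(z_1,z_2)\big)$ is a single polynomial in $(z_1,z_2)$ that is constant in $z_2$, it equals a polynomial $\kappa(z_1)$ that vanishes nowhere on $\gK^{n_1}$, which over an algebraically closed field forces $\kappa(z_1)\equiv\kappa\in\gK^\times$. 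The same differentiated identity then gives $\det J_{R^{-1}(\fois;z_1)}\equiv 1/\kappa$, and plugging $y_2=0$ into the previous display yields
\be
\det J_{H(\fois;0)}(z_1)=\frac{1}{\kappa}\,(\det J_S)\big(z_1,R^{-1}(0;z_1)\big).
\ee
Since $\kappa\in\gK^\times$, the polynomial $\det J_{H(\fois;0)}(z_1)$ is a nonzero constant if and only if $(\det J_S)(z_1,R^{-1}(0;z_1))$ is; recalling that the clause ``$R(\fois;z_1)\in\caJ_{n_2}$ for all $z_1$'' in Definition~\ref{def-jjlin} is precisely the standing hypothesis, this is the asserted equivalence $S\in\caJ^{\rm lin}_{N;n_1}\iff H(\fois;0)\in\caJ^{\rm lin}_{n_1}$.

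I do not expect a genuine obstacle once the straightening map $\Psi$ is in place; the points that need care are bookkeeping ones. First, one must know $R^{-1}$ is regular jointly in $(z_1,y_2)$ and not merely in $y_2$ for frozen $z_1$ — needed both for $H\in\caP_{n_1}$ and to make the displayed identities identities of polynomials rather than of germs — which is why I would establish it up front via $\Psi^{-1}$. Second, the Jacobian-constancy step genuinely uses that the ground field is algebraically closed (as in the intended case $\gK=\gC$), so that a nowhere-vanishing polynomial is constant; I would isolate this as a preliminary remark, after which the determinant computation is routine.
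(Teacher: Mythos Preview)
Your proof is correct and reaches the same core identity as the paper, but by a different route. The paper computes $\det J_S$ directly via the Schur complement formula for the $2\times 2$ block matrix $(\partial_{z_a} S_b)_{a,b}$, then expands $\det J_{H(\cdot;y_2)}$ by an explicit chain-rule computation of $\partial_{z_1}\big[S_1(z_1,R^{-1}(y_2;z_1))\big]$ together with implicit differentiation of $R(R^{-1}(y_2;z_1);z_1)=y_2$, and matches the two expressions term by term to obtain $\det J_S(z_1,R^{-1}(y_2;z_1))=(\det J_{R(\cdot;z_1)})\,\det J_{H(\cdot;y_2)}(z_1)$. You instead package the same linear algebra into the single factorisation $S\circ\Psi^{-1}=(H,\mathrm{id}_{n_2})$ and read off the determinant from block-triangularity of $J_{S\circ\Psi^{-1}}$ and $J_{\Psi^{-1}}$; this is cleaner and explains conceptually why the Schur complement appears (it is exactly the first-block Jacobian of the composed map). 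Your argument also makes explicit two points the paper passes over: that $R^{-1}$ is polynomial jointly in $(z_1,y_2)$, which is needed for the assertion $H\in\caP_{n_1}$, and that $\det J_{R(\cdot;z_1)}$ is a nonzero constant independent of $z_1$ and not merely of $z_2$, which the paper records as ``fixed by hypothesis'' without spelling out the Nullstellensatz step.
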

\begin{proof}
We actually prove that 
$$\det J_S(z_1,R^{-1}(y_2;z_1)) = (\det J_{R(\cdot;z_1)})\, \det J_{H(\cdot;y_2)}(z_1),$$ 
while
$\det J_{R(\cdot;z_1)} \in \gK^{\times}$ is fixed by hypothesis.

Let us start by calculating explicitly $\det J_S(z)$. Expressing
$S(z)$ in terms of $S_{1,2}$ and $z_{1,2}$ gives the block decomposition
\be
\frac{\rd}{\rd z} S(z)
=
\left(
\begin{array}{c|c}
\raisebox{-8pt}{\rule{0pt}{5pt}}%
\frac{\rd}{\rd z_1} S_1(z_1,z_2) &
  \frac{\rd}{\rd z_1} S_2(z_1,z_2) \\
\hline
\rule{0pt}{12pt}%
\frac{\rd}{\rd z_2} S_1(z_1,z_2) &
  \frac{\rd}{\rd z_2} S_2(z_1,z_2)
\end{array}
\right)
\ef.
\ee
We express the determinant of the matrix above by mean of the Schur
complement formula\footnote{For
  a block matrix $M=\begin{pmatrix} A & \!\!\!B \\ C & \!\!\!D \end{pmatrix}$, the
\emph{Schur complement formula} states 
$\det M = (\det D) \det (A - B D^{-1} C)$.}.
% $\det M = (\det A) \det (D - C A^{-1} B)$.}
Recognize that $\frac{\rd}{\rd z_2} S_2(z_1,z_2)=
\frac{\rd}{\rd z_2} R(z_2;z_1) = J_{R(\cdot;z_1)}$.
Thus
\begin{multline*}
\det
\left(
\frac{\rd S(z)}{\rd z} 
\right)
=\\
(\det J_{R(\cdot;z_1)})
\;
\det
\left(
\frac{\rd S_1(z_1,z_2)}{\rd z_1} 
-
\frac{\rd S_2(z_1,z_2)}{\rd z_1} 
(J_{R(\cdot;z_1)})^{-1}
\frac{\rd S_1(z_1,z_2)}{\rd z_2} 
\right)
\ef.
\label{eq.543765b}
\end{multline*}
Recall that $z_2 = R^{-1}(y_2;z_1)$, however, if one wants to
substitute one for the other in the expression above, some care is
required in how to interpret derivatives w.r.t.\ $z_1$ and $z_2$.
%Recall that, wherever we do not have derivatives w.r.t.\ $z_2$, it is
%legitimate to use $z_2$ and $G^{-1}(y_2;z_1)$ as synonimous, 
%% Evaluate now 
We pass instead to the second evaluation
\be
\begin{split}
\det J_{H(\cdot;y_2)}(z_1) 
&=
\det
\left(
\frac{\rd H(z_1;y_2)}{\rd z_1} 
\right)
=
\det
\left(
\frac{\rd S_1(z_1;R^{-1}(y_2;z_1))}{\rd z_1} 
\right)
\ef.
\end{split}
\ee
We have to evaluate the total derivative w.r.t.\ the components of $z_1$.
In the rightmost matrix, we have two contributions, coming from
derivations acting on the first and second arguments, namely
\be
\begin{split}
\frac{\rd S_1(z_1;R^{-1}(y_2;z_1))}{\rd z_1} 
&=
\left.\left(
\frac{\rd S_1(z_1;R^{-1}(y_2;u_1))}{\rd z_1} 
+
\frac{\rd S_1(u_1;R^{-1}(y_2;z_1))}{\rd z_1} 
\right)\right|_{u_1=z_1}
\\
&=
%\left.
\left(
\frac{\rd S_1(z_1;z_2)}{\rd z_1} 
+
\frac{\rd R^{-1}(y_2;z_1)}{\rd z_1} 
\frac{\rd S_1(z_1;z_2)}{\rd z_2} 
\right)
%\right|_{u_1=z_1}
\end{split}
\label{eq.543765a}
\ee
and finally recognize that (here $I_n$ is the $n$-dimensional identity matrix)
\be
0 = 
\frac{\rd \,I_{n_2}}{\rd z_1} 
=
\frac{\rd R(R^{-1}(y_2;z_1);z_1)}{\rd z_1} 
=
\frac{\rd R(z_2;z_1)}{\rd z_1} 
+
\frac{\rd R^{-1}(y_2;z_1)}{\rd z_1} 
\frac{\rd R(z_2;z_1)}{\rd z_2} 
\ee
from which
\be
\frac{\rd R^{-1}(y_2;z_1)}{\rd z_1} 
=
-
\frac{\rd R(z_2;z_1)}{\rd z_1} 
\left(\frac{\rd R(z_2;z_1)}{\rd z_2} \right)^{-1}
=
-
\frac{\rd S_2(z_1,z_2)}{\rd z_1} 
(J_{R(\cdot;z_1)})^{-1}
\ef.
\label{eq.543765}
\ee
Substituting (\ref{eq.543765}) into (\ref{eq.543765a}), and
comparing to (\ref{eq.543765b}), leads to the conclusion.
%\qed
\end{proof}

\begin{lemma}[Partial elimination, invertible version]
\label{lem.peinj}
Let $S$, $R$ and $H$ as in Lemma \ref{lem.pelin}. In particular, assume by hypothesis that $R(\fois; z_1) \in \caJ_{n_2}$ for
all $z_1 \in \gK^{n_1}$.
%% , so that $G^{-1}(y_2;z_1)$ exists. Define
%% $H(z_1;y_2) = F_1(z_1,G^{-1}(y_2;z_1)) \in \caP_{n_1}$ (now $y_2$
%% coordinates are intended as parameters). 
We have
\be
S(z) \in \caJ_{N;n_1} 
\qquad \textit{iff} \qquad
H(\fois;0) \in \caJ_{n_1} 
\ef.
\ee
\end{lemma}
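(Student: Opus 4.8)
The plan is to construct the inverse of $S$ explicitly from the inverse of $H(\fois;0)$ and from the inverse of the family $R(\fois;z_1)$, and conversely. The key observation is that solving $S(z_1,z_2)=(y_1,0)$ (which is what the condition ``$S^{-1}$ restricted to $\gC^{n_1}\times\{0\}$ is polynomial'' requires) can be done in two stages: first invert the second block, then the first. Concretely, given the target value $(y_1,0)$, the equation $S_2(z_1,z_2)=0$ reads $R(z_2;z_1)=0$, so by hypothesis $z_2 = R^{-1}(0;z_1)$, a polynomial in $z_1$. Substituting this into the first block, $S_1(z_1,R^{-1}(0;z_1)) = H(z_1;0)$, so the remaining equation is exactly $H(z_1;0)=y_1$. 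Thus $S^{-1}(y_1,0)$ exists and is polynomial in $y_1$ if and only if $H(\fois;0)^{-1}$ exists and is polynomial.

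First I would prove the ``only if'' direction: assuming $S\in\caJ_{N;n_1}$, I need that $H(\fois;0)$ is invertible with polynomial inverse. Given $y_1$, set $(z_1,z_2) = S^{-1}(y_1,0)$; the second component of this equation forces $R(z_2;z_1)=0$, hence $z_2=R^{-1}(0;z_1)$, and then $H(z_1;0) = S_1(z_1,R^{-1}(0;z_1)) = S_1(z_1,z_2) = y_1$. So $z_1$ is a preimage of $y_1$ under $H(\fois;0)$. Injectivity of $H(\fois;0)$ follows from injectivity of $S$: if $H(z_1;0)=H(z_1';0)$, then $(z_1,R^{-1}(0;z_1))$ and $(z_1',R^{-1}(0;z_1'))$ are both mapped by $S$ to the same point $(H(z_1;0),0)$. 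Polynomiality of $z_1$ as a function of $y_1$ is inherited from the polynomiality of $S^{-1}$ on $\gC^{n_1}\times\{0\}$ composed with the projection onto the first $n_1$ coordinates.

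Then I would prove the ``if'' direction: assuming $H(\fois;0)\in\caJ_{n_1}$, I build $S^{-1}$ restricted to $\gC^{n_1}\times\{0\}$. Define $z_1 = H(\fois;0)^{-1}(y_1)$ (polynomial in $y_1$) and $z_2 = R^{-1}(0;z_1)$ (polynomial in $z_1$, hence in $y_1$). Then $S_2(z_1,z_2) = R(z_2;z_1) = 0$ and $S_1(z_1,z_2) = H(z_1;0) = y_1$, so $S(z_1,z_2)=(y_1,0)$, giving a polynomial section. To promote this to genuine global invertibility of $S$ (i.e.\ $S\in\caJ_N$, which membership in $\caJ_{N;n_1}$ requires only on $\gC^{n_1}\times\{0\}$), one would run the same two-stage argument for an arbitrary target $(y_1,y_2)$: solve $R(z_2;z_1)=y_2$ to get $z_2 = R^{-1}(y_2;z_1)$, substitute to get $S_1(z_1,R^{-1}(y_2;z_1)) = H(z_1;y_2) = y_1$; but here one needs $H(\fois;y_2)$ invertible for all $y_2$, not just $y_2=0$ — which is why Definition \ref{def-jjlin} only demands the restriction to $\gC^{n_1}\times\{0\}$, and the lemma statement is correspondingly about $H(\fois;0)$ alone. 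I would need to check carefully that the definition of $\caJ_{N;n_1}$ indeed only tests the polynomial-inverse condition on that subspace, so that the two-stage solve at $y_2=0$ suffices.

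The main obstacle I expect is the bookkeeping around polynomiality and well-definedness of the composition $z_1\mapsto R^{-1}(0;z_1)$: one must use that $R^{-1}(y_2;z_1)$ is, by the hypothesis $R(\fois;z_1)\in\caJ_{n_2}$ together with the Bass--Connell--Wright fact (cited after Conjecture \ref{conj.jc}) that set-theoretic inverses of polynomial systems are automatically polynomial, jointly polynomial in $(y_2,z_1)$ — or at least polynomial in $z_1$ after setting $y_2=0$. Establishing this joint polynomiality (as opposed to polynomiality in $y_2$ for each fixed $z_1$) is the delicate point, and is presumably where the argument must either invoke a parametrized version of the automatic-polynomiality result or argue directly that the coefficients depend polynomially on the parameters $z_1$. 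Once that is in hand, the equivalence $S\in\caJ_{N;n_1} \iff H(\fois;0)\in\caJ_{n_1}$ is a direct two-way translation of the defining equations, exactly parallel to the determinant computation of Lemma \ref{lem.pelin}.
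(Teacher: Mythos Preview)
Your proposal is correct and follows essentially the same route as the paper: a two-stage solve where one first inverts the second block via $R$ to express $z_2=R^{-1}(y_2;z_1)$, then reduces the first block to the equation $H(z_1;y_2)=y_1$, reading off $H^{-1}(y_1;0)=(S^{-1})_1(y_1,0)$ in one direction and $(S^{-1})(y_1,0)=\big(H^{-1}(y_1;0),\,R^{-1}(0;H^{-1}(y_1;0))\big)$ in the other. The paper writes the inverse formulas for general $y_2$ but, as you correctly note, only $y_2=0$ is actually needed or asserted; and the joint-polynomiality issue for $R^{-1}(0;z_1)$ that you flag is indeed glossed over in the paper's proof, but is immaterial in the intended application (Theorem~\ref{lem-main}), where $R$ is affine in $z^{(2)}$ and $R^{-1}$ is manifestly polynomial in all arguments.
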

\begin{proof}
We shall prove that $S^{-1}(\fois;0) \in \caP_{n_1}$ exists if and only if
$H^{-1}(\fois;0) \in \caP_{n_1}$ exists.

The direction ``$H \Rightarrow S$'' is obvious: just take $S^{-1}$ as
\begin{align}
% z_1 
(S^{-1})_1(y_1,y_2)
&=
H^{-1}(y_1;y_2)
\ef;
&
%z_2 
(S^{-1})_2(y_1,y_2)
&=
R^{-1}(y_2;H^{-1}(y_1;y_2))
\ef.
\end{align}
%% and recall that, in $G$ and $G^{-1}$ ????, also the dependence from
%% the parameters is polynomial ((((????? IS THIS A SUBTLE POINT ????? DO
%% I NEED TO PUT THIS IN THE HYPOTHESES ?????)))
For the opposite direction, 
``$S \Rightarrow H$'', we start by assuming to have
\begin{align}
z_1 
&=
(S^{-1})_1(y_1,y_2)
\ef;
&
z_2 
&=
(S^{-1})_2(y_1,y_2)
\ef;
\end{align}
but we already know that $z_2 = R^{-1}(y_2;z_1)$. Thus, from the
unicity of the inverse, we obtain that
\be
\label{eq.654687676}
(S^{-1})_2(y_1,y_2)
=
R^{-1}(y_2; (S^{-1})_1(y_1,y_2) )
\ef.
\ee
From the definition of $H$ in terms of $S$, we get
\be
H(z_1;y_2) = S_1(z_1, R^{-1}(y_2;z_1) )
\ef.
\ee
Calculate
\be
\begin{split}
H \big( (S^{-1})_1(y_1,y_2 & );  y_2 \big) 
= 
S_1  \big( (S^{-1})_1(y_1,y_2),
  R^{-1}(y_2;(S^{-1})_1(y_1,y_2)) \big)
\\
&= 
S_1 \big( (S^{-1})_1(y_1,y_2),
  (S^{-1})_2(y_1,y_2) \big)
=
S_1(S^{-1}(y_1,y_2))
= y_1
\ef;
\end{split}
\ee
where we used Equation (\ref{eq.654687676}). From the unicity of the inverse
(when it exists), and its characterizing equation $H \big(
H^{-1}(y_1;y_2);y_2 \big) = y_1$, we can identify
\be
H^{-1}(y_1;y_2) = (S^{-1})_1(y_1,y_2)
\ee
and thus conclude.
 
\end{proof}

As announced above, we now prove 
%theorem allowing to reduce the
%degree for any $F \in \caP_{n,d}$. 
Theorem \ref{lem-main}.

%% ............ As a parenthetic
%% algorithmic remark, the reduced system $\tilde{F} \in
%% \caP_{\tilde{n},2}$ lives in a space with $\tilde{n}$............
%% ESTIMATES !!!!!

%\begin{theorem}
%%[Reduction to degree 2]
%For all $d \geq 2$, and $n \geq 1$, $\caJ_{n^2+n,d;n}^{\rm lin} =
%\caJ_{n^2+n,d;n}$ implies $\caJ_{n,d+1}^{\rm lin} = \caJ_{n,d+1}$.
%\end{theorem}
%Note the position of the quantifier $n$, in contrast to Theorem
%\ref{thm.red3}.
\begin{proof}
Let us outline the proof. From $F\in\caP_{n,d+1}$, we will construct a function $\tilde F=\Phi(F)\in\caP_{n(n+1),d}$ with $F(z^{(1)})= H(z^{(1)};0)$. Using Lemma \ref{lem.pelin}, if $F\in\caJ_{n,d+1}^{\rm lin}$, we then have that $\tilde F\in\caJ_{n(n+1),d,n}^{\rm lin}$, so
\begin{equation*}
\Phi(\caJ^{\rm lin}_{n,d+1})=\caJ^{\rm lin}_{n(n+1), d;n}\cap \Imag(\Phi).
\end{equation*}
By using Lemma \ref{lem.peinj} in a similar way, we also have $\Phi(\caJ_{n,d+1})=\caJ_{n(n+1), d;n}\cap \Imag(\Phi)$.  

We start from $F(z^{(1)})\in\caP_{n,d+1}$. Trivial arguments allow to establish that $F \in \caJ_n$ iff 
$F-F(0) \in \caJ_n$, i.e.\ we can drop the part of degree zero in
$F$ (see e.g.\ \cite[Prop.\ 1.1, pag.\ 303]{BassCW}).
Thus a generic $F\in\caP_{n,d+1}$ has the form
\begin{align}
F(z^{(1)})
&= \sum_{c=1}^{d+1}
F_c(z^{(1)})
\ef;
%% \\
%% H_c(z^{(1)})
%% &=
%% \sum_{i,j_1,\ldots,j_c=1}^m
%% -w^i_{j_1,\ldots,j_c} 
%% z^{(1)}_{j_1} \cdots z^{(1)}_{j_c} 
%% \ef;
\end{align}
where $F_c$ is homogeneous of degree $c$.
% we assume that the $w$'s are invariant under permutation of the
% $j_{\alpha}$ indices.

From $F\in\caP_{n,d+1}$, we will construct a $\tilde F=\Phi(F)\in\caP_{n(n+1),d}$ such that we can identify $F(z^{(1)}) = H(z^{(1)};0)$, where $H(z^{(1)};y^{(2)})$ is associated to $\tilde F$ in the way this is done in Lemma \ref{lem.pelin}. We indeed use here notations similar to those of Lemma \ref{lem.pelin}, with $S=\tilde F$ and $N=n(n+1)$, except that,
as we use explicit component indices, we use upper-scripts for blocks
(e.g.\ $\tilde F^{(1)}(z^{(1)},z^{(2)})$, instead of $F_1(z_1,z_2)$, and
$z^{(1)}=\{z^{(1)}_i\}_{1 \leq i \leq n}$). Clearly we have $n_1=n$
and $n_2=n^2$.  As our construction is structured, we use double
indices for components in the second block,
i.e.\ $z^{(2)}=\{z^{(2)}_{ij}\}_{1 \leq i,j \leq n}$, instead of
$z^{(2)}=\{z^{(2)}_{\ell}\}_{1 \leq \ell \leq n^2}$.

Set now $\tilde F$ of degree at most $d$
and block dimensions $n$ and $n^2$, with explicit expression
\begin{align}
\tilde F^{(1)}_i (z^{(1)}, z^{(2)})
&=
\sum_j z^{(2)}_{ij} z^{(1)}_j 
\ef;
\\
\tilde F^{(2)}_{ij} (z^{(1)}, z^{(2)})
&=
z^{(2)}_{ij} -
\sum_c \frac{1}{c} \frac{\rd}{\rd z^{(1)}_j} \big(F_c\big)_i (z^{(1)})
\ef.
\end{align}
The complicated rightmost summand in 
$\tilde F^{(2)}_{ij}$ only depends on $z^{(1)}$, so that in fact
$R(z^{(2)};z^{(1)})$ is linear, and its invertibility is trivially
established, namely
\be
R^{-1}(y^{(2)};z^{(1)})
= y^{(2)}_{ij} +
\sum_c \frac{1}{c} \frac{\rd}{\rd z^{(1)}_j} \big(F_c\big)_i (z^{(1)})
\ef.
\ee
We only have to check that $H$, specialized to $y^{(2)}=0$, is equal to $F$, i.e.\ that
\be
F(z^{(1)})
= H(z^{(1)};0) 
:= \tilde F^{(1)}(z^{(1)},R^{-1}(0;z^{(1)}))
% y^{(2)}
\ef.
\ee
Dropping the now useless superscripts
%, and going to components, 
this reads
\be
\begin{split}
% H_i(z) &=
\big(\tilde F^{(1)}\big)_i (z,R^{-1}(0;z))
&=
\sum_j z_j \left( 0 + \sum_c \frac{1}{c} 
\frac{\rd}{\rd z_j} \big(F_c\big)_i (z)
\right)
\\
&=
\sum_c 
\frac{1}{c} 
\sum_j z_j 
\frac{\rd}{\rd z_j} \big(F_c\big)_i (z)
=
F_i (z)
\ef,
\end{split}
\ee
as was to be proven.\footnote{We used the obvious fact that if
  $A(x_1,\ldots,x_n)$ is a homogeneous polynomial of degree $d$, one
  has
\be
\frac{1}{d} \sum_{i=1}^n 
x_i \frac{\rd}{\rd x_i} A(x) = A(x)
\ef.
\ee
} 

\end{proof}

%% xxxxxxxxxxxxxxxxxxxxxxxxxxxxx
%% OLD things....

%% Recall that we have to prove that $F(z) \in \caJ_n$ if and only if
%% $H(z_1;y_2) \in \caJ_{n_1}$ for any vector $y_2 \in \gK^{n_2}$.
%% In order to do so, we prove that
%% %
%% $F(z) \in \caJ_n^{\rm lin}$ if and only if $H(z_1;y_2) \in
%% \caJ_{n_1}^{\rm lin}$ for any vector $y_2 \in \gK^{n_2}$, and, given
%% that the facts above hold, that $F^{-1}(y)$ exists if and only if
%% $H^{-1}(y_1;y_2)$ exists for any vector $y_2 \in \gK^{n_2}$.
%% %
%% In an alternate second part of the proof, we exploit Theorem
%% \ref{thm.inj}, and show that, given that the facts on $\caJ^{\rm lin}$
%% hold, $F(z)$ is injective if and only if $H(z_1;y_2)$ is
%% injective for any vector $y_2 \in \gK^{n_2}$.

%% -------

%% Now we have to prove that, given that both $\det J_F(z)$ and $\det
%% J_{H(\cdot;y_2)}(z_1)$ are non-zero constants (the latter also
%% independent on parameters $y_2$),
%% %  there exists a polynomial expression for
%% $F^{-1}(y) \in \caP_n$ exists if and only if
%% % there exists a polynomial expression for 
%% $H^{-1}(y_1;y_2) \in \caP_{n_1}$ exists for any vector $y_2
%% \in \gK^{n_2}$. 

%% xxxxxxxxxxxxxxxxxxxxxxxxx

%%%%%%%%%%%%%%%%%%%%%%%%%%%%%%%%%%%%%%%%%%%%%%%%%%%%%%%
%\section{Reducibility}
%\label{sec.redu}

%CONJECTURES ON TOTAL REDUCIBILITY HERE????

%ARE WE NOW ABLE TO PROVE THIS ?????

\section{QFT proof of the reduction theorem}

\subsection{Heuristic QFT proof}
\label{subsec-eur}

We consider in this section QFT arguments which are {\it heuristic},
as they involve rewritings of the involved algebraic quantities, in
terms of formal integrals, deformations of Gaussian integrals, that do
not generally converge. The reasonings implying (some of) the facts
that can be derived by these methods are illustrated in
\cite{malek-SLC}. Although, in our case, it would just be easier to
translate our procedure, step by step, into a purely algebraic one. We
did not perform this here, as we find that the QFT formalism provides
a notational shortcut and a useful visualization of the algebraic
derivation.

%%The heuristic proof we give here already gives a "flavor" of the way
%%we will obtain a rigorous proof later on. 

%This is done by reducing the degree $d$ of the polynomial, which corresponds to an increase of the dimension $n$ of the polynomial system in \cite{BassCW}. 
%Using an inhomogeneous extension of the Abdesselam-Rivasseau model  (model introduced in \cite{Abdesselam0}), we can prove this reduction of degree $d$ using the {\it intermediate field method}, but here we can reduce to the degree $d=2$, instead of $d=3$ as in \cite{BassCW}.

%\medskip

Let us start by briefly recalling what we shall call the
\emph{Abdesselam--Rivasseau model} (see \cite{Abdesselam0} for
details).  This model is a `zero-dimensional QFT model'.  Here, the
`dimension' $D$ refers to the fact that QFT models are stated in terms
of functional integrals, for field $\phi_i(x)$ depending on a discrete
index $i$ and a continuous coordinate $x \in \mathbb{R}^D$. Here we
are in the much simpler case $D=0$, i.e.\ we have only discrete
indices, this fact being in part responsible for the possibility of
producing rigorous proofs within this formalism (in such a situation,
some authors refer to a {\it combinatorial QFT}).  Note that $D$ shall
not be confused with the dimension $n$ of the linear system~$F(z)$.
%% The dimension zero appearing here refers to the dimension of
%% space-time that a QFT model lives on (usually this space-time
%% dimension is four, but here it is taken to be zero). This dimension is
%% obviously not the same one as the dimension $n$ of the polynomial
%% system that we consider in the Jacobian conjecture. Furthermore, since
%% this space-time dimension is equal to zero, the usual functional
%% integrals of QFT become regular integrals on $\gC^n$ and the fields
%% (which are usually functions of space-time) become
We anticipate that our fields will be complex variables, in
holomorphic basis, and the associated integrals will be on $\gC^n$
(i.e., with measure ${\rm d} \phi \,{\rm d} \phi^\dag$).

%% complex variables, as we will see below. Finally, let us also mention
%% that working in a zero-dimensional QFT setting allows for {\it purely
%%   combinatorial calculations} to replace the usual QFT calculations
%% (where one has also to considers Laplacian operators on the respective
%% space-time) - this justifies the terminology {\bf combinatorial QFT}
%% that we use in this paper.

%\medskip

Now, let $n,d\geq 1$, and let $F \in \caP_{n,d}$.
% $F:\gC^n\to\gC^n$ be a polynomial map composed of monomials of
% degree less than 
Invertibility of $F_1(z)$ is equivalent to the invertibility of
$F_2(z):=F_1(Rz+u)$, for $R \in \mathrm{GL}(n,\mathbb{C})$ and $u \in
\mathbb{C}^n$, and $F_1$, $F_2$ have the same degree, thus w.l.o.g.\ 
we can assume that $F(z)=z+\mathcal{O}(z^2)$.
% Up to a constant term (which will correspond to a translation in the
% variable $u$ introduced later) and a linear change of variables,
In such a case the coordinate functions of $F$ can be written as
\begin{equation*}
F_i(z)=z_i-\sum_{k=2}^d\sum_{j_1,...,j_k=1}^n
w^{(k)}_{i,j_1...j_k}z_{j_1}...z_{j_k}=: z_i-\sum_{k=2}^d W_i^{(k)}(z)
\ef,
\end{equation*}
for $i\leq n$ and $w^{(k)}_{i,j_1...j_k}$ some
coefficients.\footnote{Although only the symmetrized quantities
  $\sum_{\sigma \in \mathfrak{S}_k}
  w^{(k)}_{i,j_{\sigma(1)}...j_{\sigma(k)}}$ contribute, it is
  convenient to keep this redundant notation.}  We introduce the
inhomogeneous extension of the QFT model of \cite{Abdesselam0}.
% This is given by the partition function
\begin{equation*}
Z(J,K)=\int_{\gC^n}\dd\varphi\dd\varphi^\dag e^{-\varphi^\dag\varphi+\varphi^\dag \sum_{k=2}^d W^{(k)}(\varphi)+J^\dag\varphi+\varphi^\dag K},
\end{equation*}
where $J$, $K$ are vectors in $\gC^n$. The full expression is called
\emph{partition function}, the expression in the exponential is called \emph{action},
the coefficients $w$ are called
\emph{coupling constants}, while $J$ and $K$ are called \emph{external sources}.
%% Note that without the interaction term (i.e. if the so-called
%% $w^{(k)}_{i,j_1...j_k}$ are vanishing, $w^{(k)}_{i,j_1...j_k}=0$), we
%% can compute 
When the coupling constants are set to zero, the integral
% the partition function 
is calculated 
by Gaussian integration:
\begin{equation}
\int_{\gC^n}\dd\varphi\dd\varphi^\dag e^{-\varphi^\dag\varphi+J^\dag\varphi+\varphi^\dag K}=e^{J^\dag K}.
\label{eq-gauss}
\end{equation}
We can then express the unique formal inverse $G$ of $F$. Indeed, for $H_i$ an analytic function and $u\in\gC^n$,
\begin{multline*}
\int\dd\varphi\dd\varphi^\dag H_i(\varphi)e^{-\varphi^\dag F(\varphi)+\varphi^\dag u}= \int\dd\tilde\varphi\dd\varphi^\dag H_i(G(\tilde\varphi+u))e^{-\varphi^\dag \tilde\varphi}\det(\partial G(\tilde\varphi+u))\\
=\int\dd\tilde\varphi H_i(G(\tilde\varphi+u))\delta(\tilde\varphi)\det(\partial G(\tilde\varphi+u))
=H_i(G(u))\det(\partial G(u)),
\end{multline*}
with the change of variables: $\tilde\varphi=F(\varphi)-u$.  Taking
the ratio of such expressions, for $H_i(z)=z_i$ at numerator, and
$H_i(z)=1$ at denominator, we obtain that the formal inverse
corresponds to the one-point (outgoing) correlation function:
\begin{equation}
G_i(u)=\frac{\int_{\gC^n}\dd\varphi\dd\varphi^\dag \varphi_ie^{-\varphi^\dag\varphi+\varphi^\dag \sum_{k=2}^d W^{(k)}(z)+\varphi^\dag u}}{\int_{\gC^n}\dd\varphi\dd\varphi^\dag e^{-\varphi^\dag\varphi+\varphi^\dag \sum_{k=2}^d W^{(k)}(z)+\varphi^\dag u}}\label{eq-1point}
\end{equation}
Moreover, the partition function coincides with the inverse of the Jacobian:
\begin{equation*}
Z(0,u)=\det(\partial G(u))= JG(u)=\frac{1}{JF(G(u))}.
\end{equation*}
The sets of polynomial functions involved in the Jacobian Conjecture
can be rephrased in this framework:
\begin{align*}
\caJ^{\rm lin}_{n,d}
&=
\{F\in\caP_{n,d}\,|\, Z(0,u)=1\,\forall u\in\gC^n\},
\\
\caJ_{n,d}
&=
\{F\in\caP_{n,d}\,|\,G_i(u)\text{ given by \eqref{eq-1point} is in
}\caP_{n}\}.
\end{align*}
\medskip

Let us now introduce the {\it intermediate field method} to reduce the
degree $d$ of $F$. We will thus add $n^2$ ``intermediate fields''
$\sigma$ to the model. Indeed, we have, from the general formula
(\ref{eq-gauss}) of Gaussian integration,
\begin{multline}
\label{eq-identfond}
e^{( \varphi^\dag_{i}\varphi_j )
\big(
\sum_{j_2,...,j_d=1}^n
w^{(d)}_{i,j,j_2...j_d}\varphi_{j_2}...\varphi_{j_d} \big) }
\\
=
\int_{\gC^{n^2}}\dd\sigma_{i,j}\dd\sigma^\dag_{i,j} 
e^{-\sigma_{i,j}^\dag\sigma_{i,j}
+\sigma^\dag_{i,j}
\big(
\sum_{j_2,...,j_d=1}^n
w^{(d)}_{i,j,j_2...j_d}\varphi_{j_2}...\varphi_{j_d}
\big)
+(
\varphi^\dag_{i}\varphi_j
)
\sigma_{i,j}}
\end{multline}
%% \begin{multline*}
%% \int_{\gC^{n^2}}\dd\sigma\dd\sigma^\dag e^{-\sum_{i,j=1}^n\Big(-\sigma_{i,j}^\dag\sigma_{i,j}+\sigma^\dag_{i,j}\sum_{j_2,...,j_d=1}^n w^{(d)}_{i,j,j_2...j_d}\varphi_{j_2}...\varphi_{j_d}+\varphi^\dag_{i}\varphi_j\sigma_{i,j} \Big)}\\
%% = \int_{\gC^{n^2}}\dd\tilde\sigma\dd\sigma^\dag e^{-\sum_{i,j=1}^n\Big(-\sigma_{i,j}^\dag\tilde\sigma_{i,j}+\varphi^\dag_{i}\varphi_j(\tilde\sigma_{i,j}+\sum_{j_2,...,j_d=1}^n w^{(d)}_{i,j,j_2...j_d}\varphi_{j_2}...\varphi_{j_d}) \Big)},
%% \end{multline*}
%% where we have used the change of variables: $$\tilde\sigma_{i,j}=\sigma_{i,j}-\sum_{j_2,...,j_d=1}^n w^{(d)}_{i,j,j_2...j_d}\varphi_{j_2}...\varphi_{j_d}.$$ 
%% Gaussian integration gives:
%% \begin{multline}
%% \int_{\gC^{n^2}}\dd\sigma\dd\sigma^\dag e^{\sum_{i,j=1}^n\Big(-\sigma_{i,j}^\dag\sigma_{i,j}+\sigma^\dag_{i,j}\sum_{j_2,...,j_d=1}^n w^{(d)}_{i,j,j_2...j_d}\varphi_{j_2}...\varphi_{j_d}+\varphi^\dag_{i}\varphi_j\sigma_{i,j} \Big)}\\
%% = e^{\sum_{i,j=1}^n\varphi^\dag_{i}\varphi_j\sum_{j_2,...,j_d=1}^n w^{(d)}_{i,j,j_2...j_d}\varphi_{j_2}...\varphi_{j_d}}.\label{eq-identfond}
%% \end{multline}
We now use the identity \eqref{eq-identfond}, for each pair $(i,j)$,
in the partition function of the model with $n$ dimensions and
degree $d$, in order to to re-express the monomials of degree $d$ in
the fields $\varphi$. This leads to:
\begin{multline*}
Z(J,K)=\int_{\gC^n}\dd\varphi\dd\varphi^\dag\int_{\gC^{n^2}}\dd\sigma\dd\sigma^\dag e^{-\varphi^\dag\varphi+\varphi^\dag \sum_{k=2}^{d-1} W^{(k)}(\varphi)+J^\dag\varphi+\varphi^\dag K}\\ e^{\sum_{i,j=1}^n\Big(-\sigma_{i,j}^\dag\sigma_{i,j}+\sigma^\dag_{i,j}\sum_{j_2,...,j_d=1}^n w^{(d)}_{i,j,j_2...j_d}\varphi_{j_2}...\varphi_{j_d}+\varphi^\dag_{i}\varphi_j\sigma_{i,j} \Big)}.
\end{multline*}

We define the new vector $\phi$ of $\gC^{n+n^2}$ by
$\phi=(\varphi_1,\ldots,\varphi_n,
\sigma_{1,1}, \ldots, \sigma_{1,n},$ 
$\cdots,
\sigma_{n,1}, \ldots, \sigma_{n,n})$.
%% \begin{itemize}
%% \item for $i\in\{1,...,n\}$, we set $\phi_i:=\varphi_i$.
%% \item for $i\in\{n+1,...,n+n^2\}$, it can be written as $i=n\fois a+b+1$ with $a\in\{1,...,n\}$ and $b\in\{0,...,n-1\}$, and we set $\phi_i:=\sigma_{a,b+1}$.
%% \end{itemize}
We further define the interaction coupling constants $\tilde w$ as:
\begin{itemize}
\item for $k=d-1$, we set $\tilde w^{(d-1)}_{i,j,j_2...j_d}:=w^{(d-1)}_{i,j,j_2...j_d}$ and $\tilde w^{(d-1)}_{i\fois n+j,j_2...j_d}=w^{(d)}_{i,j,j_2...j_d}$ with $i,j,j_2,...j_n\leq n$
\item for $k\in\{3,...,d-2\}$, we set $\tilde w^{(k)}_{i,j,j_2...j_k}:=w^{(k)}_{i,j,j_2...j_k}$ with $i,j,j_2,...j_n\leq n$
\item for $k=2$, we set $\tilde w^{(2)}_{i,j,j_2}:=w^{(2)}_{i,j,j_2}$ and $\tilde w^{(2)}_{i,j,i\fois n+j}=1$ with $i,j,j_2\leq n$.
\end{itemize}
The remaining coefficients of $\tilde w$ are set to 0. 

In the same way, the external sources are defined to be
$\tilde J = (J,0)$ and $\tilde K = (K,0)$, where, of course, the
number of extra vanishing coordinates is $n^2$.
%% \beqa
%% \tilde J_i:=J_i \mbox{ and } \tilde K_i:=K_i \mbox{ for }i\leq n
%% \eeqa
%%  and vanishing otherwise. 
% $\tilde J$ and $\tilde K$ in the model of dimension $n(n+1)$ 
It is important to note that these external sources have fewer degrees
of freedom than coordinates ($n$ vs.\ $n(n+1)$).  We also remark that,
for generic $d$, in order to have a relation adapted to induction, it
is crucial to consider an inhomogeneous model, since the intermediate
field method originates terms of degrees $d-1$ and $3$.

%In these notations, we find
One now has
\begin{equation*}
Z(J,K)=\int_{\gC^{n+n^2}}\dd\phi\dd\phi^\dag e^{-\phi^\dag\phi+\phi^\dag \sum_{k=2}^{d-1}\tilde W^{(k)}(\phi)+\tilde J^\dag\phi+\phi^\dag \tilde K}
\end{equation*}
and
\begin{equation*}
G_i(u)=\frac{\int_{\gC^{n+n^2}}\dd\phi\dd\phi^\dag \phi_ie^{-\phi^\dag\phi+\phi^\dag \sum_{k=2}^{d-1}\tilde W^{(k)}(\phi)+\phi^\dag \tilde u}}{\int_{\gC^{n+n^2}}\dd\phi\dd\phi^\dag e^{-\phi^\dag\phi+\phi^\dag \sum_{k=2}^{d-1}\tilde W^{(k)}(\phi)+\phi^\dag \tilde u}},
\end{equation*}
for $i\in\{1,\dots,n\}$. 

We have thus showed in a heuristic way that the partition function
(resp.\ the one-point correlation function) of the model with dimension
$n\in\gN$ and degree $d\in\gN\setminus\{1,2\}$ is equal to the
partition function (resp.\ the $n$ first coordinates of the one-point
correlation function) of the model with dimension $n(n+1)$ and degree
$d-1$, up to a redefinition of the coupling constant $w\mapsto\tilde
w$ and a trivial redefinition of the external sources. Since the
partition function corresponds to the inverse of the Jacobian
(resp.\ the one-point correlation function corresponds to the formal
inverse), this gives, as announced above, a heuristic proof of
Theorem \ref{lem-main}.

%%%%%%%%%%%%%%%%%%%%%%%%%%%%%%%%%%%%%%%%%%%%%%%%%%%%%%%
\subsection{Formal inverse in QFT}
\label{subsec-inv}

In this section we adopt notations as above, but we proceed at a more
formal level. In particular, the coefficients $w$ are considered as
formal indeterminates in (multi-dimensional) power series. In order to
project more easily to the simpler context of univariate power series,
we introduce a further, redundant, indeterminate $\theta$, by
replacing each coefficient $w^{(k)}_{i,j_1...j_k}$ by
$\theta^{k-1}w^{(k)}_{i,j_1...j_k}$.  We denote by $\gC[[\theta]]$ the
ring of formal power series in $\theta$.  The exponent of $\theta$ in
the $w$'s measures the ``spin'' of the associated monomial, i.e., the
action is invariant under the transformation
$\phi_j \to \phi_j e^{i \omega}$, $\phi^\dag_j \to 
\phi^\dag_j e^{-i \omega}$, $\theta \to \theta e^{-i \omega}$.

%With this (standard QFT) change of variables, 
%which is standard in Quantum Field Theory, 
The polynomial function $F$ now is extended naturally to a function from
$\gC[[\theta]]^n$ to itself, although we are ultimately interested on
invertibility on $\gC^n$.
%% the space $\gC[[\theta]]^n$ of formal power series in $\theta$. In
%% fact, it goes and
The integrals of the previous subsection 
are now well defined as a formal expansion in $\theta$
\begin{equation}
Z(J,K):=
\int_{\gC^n}\dd\varphi\dd\varphi^\dag
e^{-\varphi^\dag\varphi+J^\dag\varphi+\varphi^\dag K}
\sum_{r=0}^\infty \frac{1}{r!}\Big(\varphi^\dag
\sum_{k=2}^d(\theta^{k-1} W^{(k)}(\varphi)\Big)^r, 
\label{eq-partfunc}
\end{equation}
where, as in the previous section, $J$ and $K$ can be considered as
vectors in $\gC^n$ (as they enter the quadratic part of the action,
there is no need of promoting them to formal indeterminates).

Any term of this power series, i.e.\ $[\theta^r] Z(J,K)$ for $p \in
\mathbb{N}$, can be calculated as a finite linear combination of terms
of the following form, with $r=q-p$
%To compute this partition function, order by order in $\theta$, it
%suffices to compute terms of the form:
\beqa
\int_{\gC^n}\dd\varphi\dd\varphi^\dag
e^{-\varphi^\dag\varphi+J^\dag\varphi+\varphi^\dag K} 
\varphi_{i_1}^\dag\ldots\varphi_{i_p}^\dag \varphi_{j_1}\ldots\varphi_{j_q}
%\mbox{ for } p,q\in\gN. 
\label{eq-gen1}
\eeqa
which, of course, is also given by
\beqa
\frac{\partial^p}{
\partial K_{i_1} \ldots\partial K_{i_p} 
}
\frac{\partial^q}{
\partial J^\dag_{j_1}\ldots\partial J^\dag_{j_q}
}
\int_{\gC^n}\dd\varphi\dd\varphi^\dag e^{-\varphi^\dag\varphi+J^\dag\varphi+\varphi^\dag K} 
%\mbox{ for } p,q\in\gN. 
\label{eq-gen1bis}
\eeqa
An analysis of such an expression, in light of \eqref{eq-gauss}, leads
to the \emph{Wick Theorem} (see for example textbooks such as \cite{book-cm}):
%% A standard QFT computation (replacing $\varphi_i^\dag$ by
%% $\partial_{K_i}$, and $\varphi_j$ by $\partial_{J_j^\dag}$, and using
%% \eqref{eq-gauss}) leads to the fact that 
the integral in \eqref{eq-gen1} is equal to the sum over all
possible substitutions, in the monomial
$\varphi_{i_1}^\dag\dots\varphi_{i_p}^\dag
\varphi_{j_1}\dots\varphi_{j_q}$ of the patterns $\varphi_i^\dag \to
J_i^\dag$, $\varphi_j \to K_j$, and $\varphi_i^\dag\varphi_j \to
\delta_{ij}$, up to have no $\varphi$'s and $\varphi^\dag$'s left.

One can then associate graphical representations 
$\Gamma$
to these expressions,
going under the name of \emph{Feynman graphs}.
%, see again QFT textbooks such as \cite{book-cm} for the general
%setting). In the particular case of 
For the model analyzed here, the set of graphs and their associated
weights are obtained through the following rules:
\begin{itemize}
\item vertices in the graph have indices $i \in \{1,\ldots,n\}$
  attached to the incident edges;
\item a term $\delta_{ij}$
%obtained by $\varphi_i^\dag\varphi_j$, 
is represented as a directed edge, with index $i$ at its endpoints
%from the index $i$ to $j$, the so-called {\it propagator}.
\item a term $J_i^\dag$ is represented as a vertex of in-degree 1 and
  out-degree 0, incident to an edge of index $i$;
% by a leaf, branched to an outgoing propagator.
\item similarly, a term $K_j$ is represented as a vertex of in-degree 0 and
  out-degree 1, incident to an edge of index $i$;
% represented by a leaf, linked to an incoming propagator.
\item a weight $\theta^{k-1}w^{(k)}_{i,j_1...j_k}$ is associated to a
  vertex with out-degree $1$ (and index $i$), and in-degree $k$ (and
  indices $\{j_1,...,j_k\}$). The incident edges have a cyclic ordering;
\item a symmetry factor $1/|\mathrm{Aut}(\Gamma)|$ appears overall, as
  a combination of the $1/r!$ factor and of multiple counting for the
  same diagram in the expansion of (\ref{eq-partfunc}).
%% in the partition function $Z$ corresponds to a vertex with one
%% outgoing edge of index $i$, and $k$ incoming edges of indices
%% $j_1,\dots,j_k$. Indices of the vertices are summed on.
\end{itemize}
Note that, in general, several but finitely many graphs contribute to
a given expression associated to a monomial
$\varphi_{i_1}^\dag\ldots\varphi_{i_p}^\dag \varphi_{j_1}\ldots\varphi_{j_q}$.

%% Reversely, to any graph constructed with edges, leaves and the type of
%% vertex described above, we can associate an explicit integral, called
%% a {\it Feynman integral}.  One can then prove that the partition
%% function $Z$ is the sum of the Feynman integrals of all such Feynman
%% graphs without external edges
%% %(all the edges are from both sides linked to vertices or leaves). 
%% (see again  QFT textbooks such as \cite{book-cm}).
%% %This is basics in QFT. 
%% %In this theory, a

For the problem at hand here, both when evaluating the Jacobian and
the formal inverse of a component, we can restrict to the case
$J=0$. In order to conform to notations in the literature, we shall
also rename $K_j$'s into $u_j$'s (indeed, when $J=0$, the source $K$
induces a formal translation of the components $\phi$'s, without
translating the $\phi^\dag$'s).
%% particular QFT model we are interested here, we set 
%% %\beqa
%% $J=0 \mbox{ and }K=u$.
%% %\eeqa
%%  (see previous section). 
%With these rules, because of its combinatorics, n
It is easy to see that, in a theory with such a constrained set of
vertex out-degrees,
all contributing Feynman graphs 
with no $J$-leaves
contain exactly
one cycle\footnote{Cycles are commonly called ``loops'' in QFT.}
%The term {\it loop} used in this paper is the equivalent, in QFT
%terminology, to the term {\it circuit} in graph theory.}
per connected component (see Fig.\ \ref{loop}), while connected graphs
with a unique $J$-leaf
% one external (outgoing) edge 
correspond to directed trees rooted at this leaf.
%(see Fig. \ref{graf1}).
\begin{figure}[!tb]
  \centering
\includegraphics[scale=1]{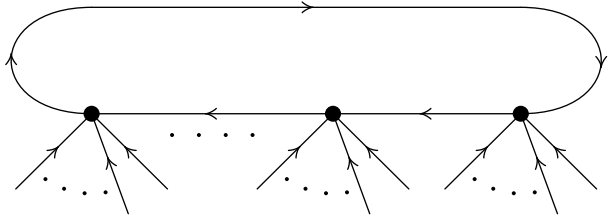}
  \caption[sigma]{Diagrammatic representation of a Feynman graph with
    no external outgoing edges, thus having one loop.}
  \label{loop}
\end{figure}

%\begin{figure}[!htb]
%  \centering
%  \includegraphics[scale=0.33]{tree.pdf}
%  \caption[sigma]{Diagrammatic representation of a Feynman graph  of the Abdesselam-Rivasseau model.}
%  \label{graf1}
%\end{figure}

%\medskip

We can now state and prove the expression for the formal inverse,
which in fact coincides with the heuristically-derived
\eqref{eq-1point}, as well as the expression of the partition function
\eqref{eq-partfunc}. Both have been already derived
in~\cite{Abdesselam0}.

\begin{theorem}
\label{thm-inv}
Define $\caA_{i}(T)(u)$ the amplitude of a tree $T$ with exactly one
outgoing edge, of index $i$.
%\in\{1,...,n\}$, 
The formal inverse of the function $F$ is the function $G$ with coordinates
\begin{equation}
G_i(u)=\sum_{r=0}^\infty \frac{1}{r!}\sum_{T: |T|=r}\caA_i(T)(u),
\end{equation}
where $T$ 
denotes a tree with one outgoing edge, and $|T|$ is the number of
vertices in $T$.
%This expression is actually analytic\footnote{of course $G_i$ is still formal in $\theta$.} in the variable $u$.
\end{theorem}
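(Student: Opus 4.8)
The plan is to derive the formula for $G$ directly from the defining fixed-point equation $F(G(u)) = u$, read off combinatorially, rather than from the integral representation (although the two routes are equivalent, and one could alternatively expand the right-hand side of \eqref{eq-1point} by Wick's theorem and classify the resulting connected graphs as rooted directed trees, as the paragraph before the theorem already sketches). First I would recall that, having reduced to $F(z) = z - \sum_{k=2}^d \theta^{k-1} W^{(k)}(z)$ with $W^{(k)}$ homogeneous of degree $k$, the equation $F(G(u)) = u$ becomes the formal recursion
\begin{equation*}
G_i(u) = u_i + \sum_{k=2}^d \theta^{k-1} W_i^{(k)}(G(u))
\ef.
\end{equation*}
Since each application of a $W^{(k)}$ carries a factor $\theta^{k-1}$ and lowers nothing else, this recursion has a unique solution in $\gC[[\theta]]^n$ (indeed in each $[\theta^r]$ only finitely many terms contribute), which is therefore the formal inverse.

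Next I would unfold the recursion: iterating, $G_i(u)$ is a sum over all ways of nesting the operators $W^{(k)}$, with the innermost arguments eventually being the $u_j$'s. Each such nesting is naturally encoded by a rooted tree $T$: the root corresponds to the outermost $W^{(k)}$ carrying the outgoing index $i$; a vertex using $W^{(k)}$ has $k$ children; and the leaves carry the substitution $z_j \mapsto u_j$. I would define the amplitude $\caA_i(T)(u)$ to be the product over internal vertices of the corresponding coupling $\theta^{k-1} w^{(k)}_{\fois}$, over leaves of the factors $u_j$, contracted along the tree edges according to the index structure of $W^{(k)}$, with the single free outgoing index equated to $i$. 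The claim is then that
\begin{equation*}
G_i(u) = \sum_{T} \frac{1}{|\mathrm{Aut}(T)|}\caA_i(T)(u)
= \sum_{r=0}^\infty \frac{1}{r!}\sum_{T: |T|=r}\caA_i(T)(u)
\ef,
\end{equation*}
where the second equality is the usual bookkeeping identity: summing over isomorphism classes with weight $1/|\mathrm{Aut}(T)|$ is the same as summing over labelled trees on $r$ vertices with weight $1/r!$, because a tree with automorphism group of order $a$ admits $r!/a$ distinct labellings. Here the redundant (non-symmetrized) notation for the $w^{(k)}$ is what makes the symmetry factor come out cleanly: each of the $k$ children of a $W^{(k)}$-vertex is attached to a distinguishable slot $j_1,\dots,j_k$, so permuting children already permutes the coupling indices.

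To make this rigorous I would argue by induction on $r = |T|$, equivalently on the power of $\theta$. The base case $r=0$ is the single-leaf "tree" giving $G_i = u_i + O(\theta)$. For the inductive step, plug the truncation of $G$ known to order $\theta^{r-1}$ into the right-hand side of the recursion; the degree-$k$ term $\theta^{k-1} W_i^{(k)}(G(u))$ produces, at order $\theta^r$, a sum over a root vertex of arity $k$ with $k$ subtrees whose total vertex count is $r-1$, and whose amplitudes multiply — this is exactly the generating-function identity $G = u + \sum_k \theta^{k-1} W^{(k)}(G)$ rewritten at the level of trees. The symmetry factor $1/r!$ in the labelled count is then just the standard exponential-formula combinatorics, which I would invoke rather than reprove. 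The one place to be careful, and what I expect to be the main (though modest) obstacle, is matching the convention for $|\mathrm{Aut}(T)|$ with the automorphisms of the associated Feynman graph $\Gamma$ — the cyclic ordering of incident edges at each vertex mentioned in the Feynman rules must be reconciled with the plain (unordered-children-but-labelled-slots) tree picture, so that the symmetry factor $1/|\mathrm{Aut}(\Gamma)|$ from the partition-function expansion agrees with $1/r!$ summed over labelled trees. Once that identification is pinned down, the theorem follows, and the same counting simultaneously yields $Z(0,u) = \det(\partial G(u))$ as the sum over one-loop graphs, matching \eqref{eq-partfunc}.
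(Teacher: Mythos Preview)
Your proposal is correct and is essentially the same argument as the paper's, just run in the opposite direction: the paper defines $G_i$ as the tree sum, decomposes each tree at its root vertex into subtrees $T_1,\dots,T_k$ (with the multinomial factor $r!/(r_1!\cdots r_k!)$ handling the labelled-tree bookkeeping you describe), and reads off the recursion $G_i(u)=u_i+\sum_k\theta^{k-1}\sum w^{(k)}_{i,i_1\dots i_k}G_{i_1}\cdots G_{i_k}$, whence $F(G(u))=u$; you instead start from that recursion and unfold it into trees. Your flagged concern about matching the cyclic-ordering convention at vertices with the $1/r!$ labelled count is a genuine but minor bookkeeping point that the paper essentially absorbs into the multinomial/automorphism discussion without further comment.
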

Note an abuse of notation here, as $i$ is not an index pertinent to
the function $\caA$, but to the variable $T$. We can equivalently
think that $T$ has its root unlabeled (and the vertex adjacent to the
root comes with no weight), and the function $\caA_i$ completes the
weight of $T$ by including the appropriate factor $w^{(k)}_{i,j_1...j_k}$.

\begin{proof}
The proof is a straightforward adaptation of \cite{Abdesselam0} to the
inhomogeneous case. If the tree $T$ has at least one vertex, its
Feynman integral can be re-expressed as
\begin{equation*}
\caA_i(T)=\sum_{k=2}^d\theta^{k-1}\sum_{i_1,\dots,i_k=1}^n w^{(k)}_{i,i_1...i_k}\caA_{i_1}(T_1)\dots\caA_{i_k}(T_k),
\end{equation*}
where we have denoted by $T_1,\dots,T_k$ the subtrees with one
outgoing external edge which is adjacent in $T$ to the vertex linked
to the external edge $i$.  If $r_1,\ldots,r_k$ denote the sizes of
these subtrees, we must have $r_1+\ldots+r_k=r-1$.  In this sum, one
has to take into consideration the aforementioned symmetry
factors. However, the automorphism group of a rooted tree is trivially
expressed in terms of those of its branches, and the labels of the
branches. We have a further multinomial factor $\frac{r!}{1!\, r_1!\dots
  r_k!}$ for the total ordering of the vertices.  Since $G_i(u)$ is
the sum of Feynman integrals of graphs with one outgoing edge of index
$i$, we have
\begin{multline}
G_i(u)=u_i+\sum_{r=0}^\infty \frac{1}{r!}\sum_{k=2}^d\theta^{k-1}\sum_{i_1,\dots,i_k=1}^n w^{(k)}_{i,i_1...i_k}\sum_{T_1,\dots,T_k}\frac{r!}{r_1!\dots r_k!}\caA_{i_1}(T_1)\dots\caA_{i_k}(T_k)\\
=u_i+\sum_{k=2}^d\theta^{k-1}\sum_{i_1,\dots,i_k=1}^n w^{(k)}_{i,i_1...i_k}G_{i_1}(u)\dots G_{i_k}(u).\label{eq-invgraph}
\end{multline}
This implies that $F_i(G(u))=u_i$, and $G$ is the formal inverse of
$F$ on $\gC[[\theta]]^n$.
\end{proof}

\begin{remark}
From the aforementioned homogeneity of $\theta$, we also
have $G_i(u, \theta)= \lambda G_i(\lambda u, \lambda^{-1} \theta)$ for
$\lambda \in \mathbb{C}[[\theta]]^{\times}$.
%% In each monomial of the formal inverse 
%% %times one $\theta$: 
%% $\theta G_i(u)$, the power of the parameter $\theta$ coincides with
%% the degree of the monomial in $u$. Therefore, $G_i(u)$ can be
%% considered as a formal power series in just one parameter $u$, instead
%% of $u$ and $\theta$. 
This essentially allows to eliminate $\theta$, and, adapting an
argument of \cite{Abdesselam0}, show that $G_i$ is actually
analytic on a certain domain of convergence in the variables $u_j$.
\end{remark}

%\medskip

One then has:

\begin{proposition}
\label{prop-partfunc}
The partition function \eqref{eq-partfunc} of the above theory is given by
\begin{equation*}
Z(0,u)=\frac{1}{\det[J(F)(
% \theta 
G(u))]}.
\end{equation*}
\end{proposition}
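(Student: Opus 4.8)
The plan is to mirror the argument just given for the one–point function in Theorem~\ref{thm-inv}, but now for the partition function with $J=0$. Recall that, by Wick's Theorem and the analysis of vertex out-degrees, the contributions to $Z(0,u)$ (with $J=0$) are sums over Feynman graphs all of whose connected components contain exactly one cycle, together with possibly several tree components attached to the $u$-leaves; cutting the cycles turns the computation into one over trees, which we have already resolved. Concretely, I would first establish that $\log Z(0,u)$ is a sum over connected one-loop graphs, and that each such graph is obtained from a directed cycle of length $\ell\geq 1$ by grafting, at each vertex of the cycle, subtrees with one outgoing edge — exactly the trees $T$ whose amplitudes sum to $G_i(u)$.

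Next I would perform the combinatorial bookkeeping for a single connected one-loop graph. A directed cycle passes through vertices carrying weights $w^{(k_a)}_{i_a,\dots}$, one of whose incoming indices is used to continue the cycle while the remaining $k_a-1$ incoming slots receive independent tree subgraphs. Summing the tree amplitudes over each such slot replaces it by a factor $G_{j}(u)$, and summing over which incoming slot continues the cycle, together with the derivative structure $\frac{\partial}{\partial z_j}W^{(k_a)}_{i}$, reconstructs the matrix $\big(\partial_j F_i\big)$ evaluated at $z=G(u)$ — up to the sign coming from $F_i(z)=z_i-\sum_k W^{(k)}_i(z)$. Thus a connected one-loop graph of cycle-length $\ell$ contributes $\tfrac1\ell\,\tr\!\big[(I-M)^{\ell}\big]$ with $M = \big(\partial_j W^{(k)}_i\big)(G(u)) = I - J_F(G(u))$, where the $1/\ell$ is the residual cyclic symmetry factor after the tree automorphisms have been absorbed as in the proof of Theorem~\ref{thm-inv}. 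Here the careful point, and the step I expect to be the main obstacle, is precisely the symmetry-factor/orientation accounting: one must check that the cyclic automorphisms of the loop, the multinomial factors for vertex orderings, and the redundancy in the coupling constants $w$ combine to give exactly $\tfrac1\ell\,\tr[(I-J_F(G(u)))^\ell]$ and nothing more, analogously to the trivial-automorphism argument used for rooted trees.

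Finally I would sum over all connected components: exponentiating the connected contribution gives
\begin{equation*}
Z(0,u)=\exp\!\Big(\sum_{\ell\geq 1}\tfrac1\ell\,\tr\big[(I-J_F(G(u)))^{\ell}\big]\Big)
=\exp\big(-\tr\log J_F(G(u))\big)=\frac{1}{\det[J_F(G(u))]},
\end{equation*}
using the standard identity $\sum_{\ell\geq1}\tfrac1\ell\,\tr[A^\ell]=-\tr\log(I-A)$ valid as a formal power series (here in $\theta$, since $M=\mathcal O(\theta)$, so convergence is automatic order by order). This matches the heuristic computation $Z(0,u)=\det(\partial G(u))=1/JF(G(u))$ from Subsection~\ref{subsec-eur}, now on rigorous formal-power-series footing, and concludes the proof. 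As a sanity check one may note that the $\ell=1$ term reproduces $\tr(I-J_F)$, i.e. the first-order term of $-\log\det J_F$, and that setting all $w$'s to zero gives $Z(0,u)=1$, consistent with $G=\mathrm{id}$.
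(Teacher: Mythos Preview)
Your proposal is correct and follows essentially the same route as the paper: pass to $\log Z(0,u)$ as a sum over connected one-loop graphs, graft trees (summing to $G(u)$) onto a directed cycle of length $\ell$, recognize the resulting matrix as $I-J_F(G(u))$ with the residual cyclic symmetry factor $1/\ell$, and then sum the logarithmic series and exponentiate via $\exp(-\tr\log)=1/\det$. The only minor slip is notational: your $M=(\partial_j W^{(k)}_i)(G(u))$ should be summed over $k$ to actually equal $I-J_F(G(u))$.
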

\begin{proof}
The partition function $Z(0,u)$ is the sum of the weights of
% Feynman integrals of 
the Feynman graphs without external edges.
% (see above). 
%By a well-known Theorem in QFT, 
% One can then prove, using 
Standard combinatorial arguments imply
that the logarithm $\ln(Z(0,u))$ is the analogous sum,
% of the Feynman integrals of  
restricted to connected Feynman graphs.
%  without external edges. 
% Recall that 
In the Abdesselam-Rivasseau QFT model, these graphs have exactly one
loop. The overall contribution of graphs without external edges and
with one loop of length $r$ gives
\begin{multline*}
\frac 1r\sum_{k_1,..,k_r=1}^d\theta^{k_1+..+k_r-r}\sum_{\ell_1=1}^{k_1}\dots\sum_{\ell_r=1}^{k_r}\,\, \sum_{j_{(1,1)},...,j_{(r,k_r)}=1}^n w^{(k_1)}_{j_{(r,\ell_r)},j_{(1,1)},j_{(1,2)}...j_{(1,k_1)}}\\
G_{j_{(1,1)}}(u)G_{j_{(1,2)}}(u)..\omi{j_{(1,\ell_1)}}..G_{j_{(1,k_1)}}(u)
\;
w^{(k_2)}_{j_{(1,\ell_1)},j_{(2,1)}...j_{(2,k_2)}}G_{j_{(2,1)}}(u)..\omi{j_{(2,\ell_2)}}..\\
..G_{j_{(2,k_2)}}(u) \dots
w^{(k_r)}_{j_{(r-1,\ell_{r-1})},j_{(r,1)}...j_{(r,k_r)}} G_{j_{(r,1)}}(u)..\omi{j_{(r,\ell_r)}}..G_{j_{(r,k_r)}}(u),
\end{multline*}
where $..\omi{j_{(1,\ell_1)}}..$ means that the factor
$G_{j_{(1,\ell_1)}}(u)$ has been omitted in the product. The factor
$\frac 1r$ above is the residual symmetry factor, naturally
corresponding to the choice of the first vertex in an oriented
cycle. Note that
\begin{multline*}
\sum_{\ell_1=1}^{k_1}\,\, \sum_{j_{(1,1)},...,j_{(1,k_1)}=1}^n w^{(k_1)}_{j_{(r,\ell_r)},j_{(1,1)},j_{(1,2)}...j_{(1,k_1)}} G_{j_{(1,1)}}(u)G_{j_{(1,2)}}(u)..\omi{j_{(1,\ell_1)}}..G_{j_{(1,k_1)}}(u)\\
= \sum_{i,j=1}^n
\left.
\frac{\partial W_i^{(k_1)}(z)}{\partial z_j}\right|_{z=G(u)}
,
\end{multline*}
so that, omitting the now trivial factor $\theta$, the whole
contribution corresponds to:
\begin{equation*}
\frac{1}{r}\sum_{i_1,...,i_r=1}^n
\Big(\delta_{i_1i_2}-J(F)_{i_1i_2}(G(u))\Big)
\Big(\delta_{i_2i_3}-J(F)_{i_2i_3}(G(u))\Big)
\dots
\Big(\delta_{i_r i_1}-J(F)_{i_r i_1}(G(u))\Big).
\end{equation*}
%% \begin{equation*}
%% \frac{1}{r}\sum_{i_1,...,i_r=1}^n\Big(\delta_{i_1i_2}-J(F)_{i_1i_2}(\theta G(u))\Big) \Big(\delta_{i_2i_3}-J(F)_{i_2i_3}(\theta G(u))\Big)\dots \Big(\delta_{i_r i_1}-J(F)_{i_r i_1}(\theta G(u))\Big).
%% \end{equation*}
One then has
\begin{equation}
%{multline*}
\ln(Z(0,u))=\sum_{r=1}^\infty \frac{1}{r} 
\tr\Big(((\gone-J(F)(G(u)))^r\Big)
% = \tr\Big(\sum_{r=1}^\infty \frac{1}{r} ((\gone-J(F)(G(u)))^r\Big)
=-\tr\Big(\ln(J(F)(G(u)))\Big),
\end{equation}
%{multline*}
%% \begin{multline*}
%% \ln(Z(0,u))=\sum_{r=1}^\infty \frac{1}{r} \tr\Big(((\gone-J(F)(\theta G(u)))^r\Big)= \tr\Big(\sum_{r=1}^\infty \frac{1}{r} ((\gone-J(F)(\theta G(u)))^r\Big)\\
%% =-\tr\Big(\ln(J(F)(\theta G(u)))\Big),
%% \end{multline*}
which concludes the proof.
\end{proof}

%%%%%%%%%%%%%%%%%%%%%%%%%%%%%%%%%%%%%%%%%%%%%%%%%%%%%%%
\subsection{Proof of the theorem}
\label{subsec-redthm}

We give in this section the combinatorial QFT proof of our main
result, Theorem \ref{lem-main}.
% on page \pageref{lem-main}.
%As already stated above, one can reduce the degree $d$ of the
%polynomials involved in the Jacobian conjecture (see $\caJ_{n,d}$ in
%conjecture \ref{conj.jc}) if we allow the dimension $n$ to increase
%and also if we add a new parameter. 

%% \begin{theorem}
%% \label{lem-reduc}
%% For $n\in\gN$ and $d\geq 3$, there exists an injective map $\Phi: F\in \caP_{n,d}\mapsto \tilde F\in\caP_{n(n+1),d-1}$ satisfying
%% \begin{equation*}
%% \caJ^{\rm lin}_{n,d}\simeq\caJ^{\rm lin}_{n(n+1), d-1;n}\cap\Phi(\caP_{n,d})\quad\text{and }\quad\caJ_{n,d}\simeq\caJ_{n(n+1), d-1;n}\cap\Phi(\caP_{n,d}).
%% \end{equation*}
%% \end{theorem}
\begin{proof}
Consider a directed tree $T$, constructed from the Feynman rules
defined in section \ref{subsec-inv}, in dimension $n$ (i.e.\ with edge
indices in $\{1,\ldots,n\}$), and degree $d$ (i.e.\ with vertices of
in-degree at most $d$).  From Theorem \ref{thm-inv}, we know that
$\caA_i(T)(u)$ is used to compute the formal inverses $G_i(u)$.

Let us now define the Feynman rules for the model in dimension
$n(n+1)$, obtained using the intermediate field method described in
Section \ref{subsec-eur}.  For $i,j\in\{1,\dots,n(n+1)\}$, one has:
\begin{itemize}
\item propagators, i.e. 
directed edges with index $i$
% to $j$, 
correspond to the term $\delta_{ij}$ (obtained by the
$\phi_i^\dag\phi_j$ substitution in Wick Theorem);
\item leaves with in-degree 1 correspond to the term $\tilde u_j$,
  and in particular, as $\tilde u_j = 0$ for $j>n$, contributing
  diagrams have all leaf-indices in the range $\{1,\ldots,n\}$;
\item vertices of coordination $k+1$, for $k\in\{1,\dots,d-1\}$, with one outgoing edge of index $i$, and $k$ incoming edges of indices $j_1,\dots,j_k$, correspond to the term $\theta^{k-1}\tilde w^{(k)}_{i,j_1...j_k}$. Indices of the vertices are summed on.
\end{itemize}
For a graphical representation of the intermediate field method leading to the model in dimension $n(n+1)$, see Figure~\ref{fig-sigma}.
\begin{figure}[!tb]
  \centering
\setlength{\unitlength}{50pt}
\begin{picture}(5.1,1.9)
\put(0,0){\includegraphics[scale=1]{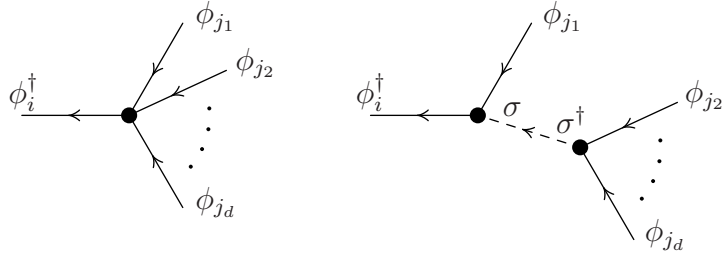}}
\put(0,1.2){$\phi^\dag_i$}
\put(1.4,1.8){$\phi_{j_1}$}
\put(1.7,1.45){$\phi_{j_2}$}
\put(1.4,0.4){$\phi_{j_d}$}
%--
\put(2.6,1.2){$\phi^\dag_i$}
\put(4.0,1.8){$\phi_{j_1}$}
\put(5.063,1.206){$\phi_{j_2}$}
\put(4.763,0.156){$\phi_{j_d}$}
\put(3.7,1.1){$\sigma$}
\put(4.1,0.974){$\sigma^\dag$}
\end{picture}
  \caption[sigma]{Diagrammatic representation of the intermediate field method.}
  \label{fig-sigma}
\end{figure}
As stated in Section \ref{subsec-eur}, $\tilde
u=(u_1,\ldots,u_n,0,\ldots,0)$, and the coefficients $\tilde w$ are
\begin{itemize}
\item for $k=d-1$, set $\tilde w^{(d-1)}_{i,j,j_2...j_d}:=
  w^{(d-1)}_{i,j,j_2...j_d}$; and $\tilde w^{(d-1)}_{i\fois
    n+j,j_2...j_d}= w^{(d)}_{i,j,j_2...j_d}$ with $i,j,j_2,...j_n\leq
  n$.
\item for $k\in\{3,...,d-2\}$, set $\tilde w^{(k)}_{i,j,j_2...j_k}:=w^{(k)}_{i,j,j_2...j_k}$ with $i,j,j_2,...j_n\leq n$.
\item for $k=2$, set $\tilde w^{(2)}_{i,j,j_2}:=w^{(2)}_{i,j,j_2}=0$; and $\tilde w^{(2)}_{i,j,i\fois n+j}=1$ with $i,j,j_2\leq n$.
% \item $\tilde u_i:=u_i$ for $i\leq n$.
\end{itemize}
The other components of $\tilde w$
% and $\tilde u$ 
are set to $0$ by definition. Note that this corresponds to consider
the new polynomial map $\tilde F:\gC^{n(n+1)}\to\gC^{n(n+1)}$ given by
% $\forall z\in\gC^{n(n+1)}$,
\begin{align*}
\tilde F_i(z)
&=
z_i-\sum_{k=3}^{d-1}\sum_{j_1,...,j_k=1}^nw^{(k)}_{i,j_1...j_k}z_{j_1}...z_{j_k}
-\sum_{j=1}^n z_j z_{i\fois n+j}
&&
\text{ for } 1\leq i\leq n,
\\
\tilde F_{i\fois n+j}(z)
&=
z_{i\fois
  n+j}-\sum_{j_2,...,j_d=1}^nw^{(d)}_{i,j,j_2,...j_k}z_{j_2}...z_{j_k}
&&
\text{ for } 1\leq i,j\leq n.
\end{align*}
To each tree $T$ in the theory of dimension $n$, and a choice of
incoming edge per each vertex of degree $d+1$, we can associate
canonically a tree $\tilde T$, constructed from these new Feynman
rules.  Propagators, leaves and vertices of coordination less than or
equal to $d$ in the tree $T$ are identically transposed in the tree
$\tilde T$, while a vertex in $T$ of coordination $d+1$ with one
outgoing edge $i$ and $d$ incoming edges of indices $j_1,\dots,j_d$ is
split into two vertices in $\tilde T$, connected by an edge of index
$n < i \leq n(n+1)$.  Edges of such indices are never adjacent on the
tree.  The precise construction is depicted in Figure~\ref{fig-sigma}.

%% \begin{enumerate}
%% \item
%% one vertex of coordination three, with one outgoing edge $i$ and two incoming edges $j_1,\ell$ and 
%% \item another vertex of coordination $d$, with one outgoing edge $\ell$ and $d-1$ incoming edges $j_2,\dots,j_d$, and with the propagator of index $\ell$ between these vertices 
%% \end{enumerate}
%% (see again Figure \ref{fig-sigma}).

Consider now the Feynman integral $\tilde\caA_i(\tilde T)(\tilde u)$,
in the model of dimension $n(n+1)$. 
For propagators, leaves and vertices of coordination less than or
equal to $d$ in the tree $T$, the contribution to $\tilde\caA_i(\tilde
T)(\tilde u)$ is the same as in $\caA_i(T)(u)$, because the summation
in $i,j_1,...,j_k$ of the vertices reduce to $\{1,...,n\}$, except for
$k=d-1$ where $i$ could a priori take value in
$\{n+1,...,n(n+1)\}$. However, the outgoing edge of this vertex is
either the external edge (with $i\in\{1,\dots,n\}$) or is adjacent to
another vertex (of coordination greater than or equal to four by
hypothesis), so we also have $i\in\{1,\dots,n\}$.

For any vertex of coordination $d+1$ in $T$, the above coefficients
$\tilde w^{(d-1)}_{i\fois n+j,j_2...j_d}$ and $\tilde
w^{(2)}_{i,j,i\fois n+j}$ have been chosen so that the contribution in
$\caA_i(T)(u)$ coincides with the one in $\tilde\caA_i(\tilde
T)(\tilde u)$. The only thing to check is that the index $\ell$
relating the two new vertices in $\tilde T$ is summed over only
$\ell\in\{n+1,\ldots, n(n+1)\}$. This is the case because $\tilde
w^{(2)}_{i,j,\ell}=0$ for $\ell\leq n$. Note also that the formal
factor $\theta^{d-1}$ for the vertex in $T$ corresponds to $\theta$
for the new vertex of coordination 3 and $\theta^{d-2}$ for the one of
coordination $d$ in $\tilde T$. Due to Feynman rules, only trees
$\tilde T$ obtained from a tree $T$ can contribute to the formal
inverse $\tilde G_i(\tilde u)$ of Theorem~\ref{thm-inv}.

One then concludes that 
\beqa
\tilde\caA_i(\tilde T)(\tilde u)=\caA_i(T)(u).
\eeqa
Due to Theorem \ref{thm-inv}, we have $\tilde G_i(\tilde u)=G_i(u)$
for $i\leq n$. So we can associate injectively to a polynomial
function $F\in \caP_{n,d}$ another function $\tilde
F=\Phi(F)\in\caP_{n(n+1),d-1}$ and we proved that $
F\in\caJ_{n,d}\Leftrightarrow \tilde F\in \caJ_{n(n+1),d-1;n}$.
 
The same process can be performed for graphs without external edges, which leads to the equality of partition functions in both QFT models, the one of dimension $n(n+1)$ and the one of dimension $n$: 
\beqa 
 \tilde Z(0,\tilde u)=Z(0,u).
\eeqa 
Moreover, for the one point correlation function with one external leg in the auxiliary field, equation \eqref{eq-invgraph} leads to the following expression:
\begin{equation*}
\tilde G_{i\fois n +j}(\tilde u)=\theta^{d-2}\sum_{j_2,\dots,j_d} w^{(d)}_{i,j,j_2\dots j_d} G_{j_2}(u)\dots G_{j_d}(u)=R^{-1}_{i\fois n+j}(0,\theta G(u)),
\end{equation*}
since $R_{i\fois n+j}(v,u)=v_{i\fois n+j}-\sum_{j_2,\dots,j_d} w^{(d)}_{i,j,j_2\dots j_d} u_{j_2}\dots u_{j_d}$ (see Definition \ref{def-jjlin}). By Proposition \ref{prop-partfunc}, we then obtain
\beqa
\det(J_{\tilde F})(\theta G(u),R^{-1}(0,\theta G(u)))=\det(J_{\tilde F})(\theta \tilde G(\tilde u))=\tilde Z(0,\tilde u)^{-1}=Z(0,u)^{-1}\nonumber\\
=\det(J_F)(\theta G(u)).\quad 
\eeqa
This proves the second part of the Theorem, namely $ F\in\caJ^{\rm lin}_{n,d}\Leftrightarrow \tilde F\in \caJ^{\rm lin}_{n(n+1),d-1;n}$.
\end{proof}
%\medskip

\section{Example}
\label{sec-example}

Let us illustrate the conjecture in low dimension $n=2$, $n'=1$ and arbitrary degree $d$ in this section. This won't be useful for the Jacobian conjecture of course, because it shows the case $n=1$, $d+1$, which is trivial. But it will give explicit computations involving the definitions introduced above.

We consider the polynomial given by
\begin{align*}
&F_1(z)=z_1-\sum_{k=0}^d a_{1,k}z_1^kz_2^{d-k}\\
&F_2(z)=z_2-\sum_{k=0}^d a_{2,k}z_1^kz_2^{d-k},
\end{align*}
where the complex coefficients $a_{i,k}$ are fixed. Then the Jacobian takes the form
\begin{multline}
\det(J_F)(z)=1-\sum_{k=0}^{d-1}(a_{1,k+1}(k+1)+a_{2,k}(d-k))z_1^kz_2^{d-1-k}\\
+\sum_{k,l=0}^d a_{1,k}a_{2,l}(d-k)l z_1^{k+l-1}z_2^{2d-k-l-1}\label{eq-ex-jac}
\end{multline}

In the standard case $n'=n=2$, by using \eqref{eq-ex-jac}, the equation $\det(J_F)=1$ leads to the conditions
\begin{itemize}
\item For any $k\in\{0,\dots,d-1\}$, $a_{1,k+1}(k+1)=a_{2,k}(d-k)$.
\item For any $m\geq 1$, $\sum_{k=0}^{\min(d,m)}a_{1,k}a_{2,m-k}d(2k-m)=0$.
\end{itemize}
If $F$ satisfies these conditions, $F$ lies in $\caJ^{\rm lin}_{2,d}$.
\medskip

Let us describe the polynomials $F$ that belong to $\caJ^{\rm lin}_{2,d;1}$ and compare with the above conditions. We will see that they are very different. Moreover, we will see that $\caJ^{\rm lin}_{2,d;1}=\caJ_{2,d;1}$.

For $n'=1$, we have to set
\begin{equation*}
R(z_2;z_1)=z_2-\sum_{k=0}^d a_{2,k}z_1^kz_2^{d-k}.
\end{equation*}
This expression has to be invertible as a polynomial in $z_2$ and for any parameter $z_1\in\gC$. In particular, the Jacobian of $R$ with respect to $z_2$ has to be constant, which implies $a_{2,k}=0$ for any $k<d-1$. So $R(z_2;z_1)=z_2-a_{2,d-1}z_1^{d-1}z_2-a_{2,d}z_1^d$. But the invertibility of $R$ for any $z_1$ also implies that $a_{2,d-1}=0$.

Eventually, we get $R(z_2;z_1)=z_2-a_{2,d}z_1^d$, and $R^{-1}(y_2;z_1)=y_2+a_{2,d}z_1^d$. Let us look at the condition $\det(J_F)(z_1,R^{-1}(0;z_1))=1$. Replacing $z_2$ by $R^{-1}(0;z_1)=a_{2,d}z_1^d$ in \eqref{eq-ex-jac}, we find the following expression
\begin{multline*}
\det(J_F)(z_1,R^{-1}(0;z_1))=1+\sum_{k=1}^d a_{1,k}a_{2,d}^{d-k}(k(d-1)-d^2)z_1^{(d-1)(d-1-k)}\\
+a_{1,0}a_{2,d}^dz_1^{(d-1)(d+1)}.
\end{multline*}
Then, the polynomial $F$ belongs to $\caJ^{\rm lin}_{2,d;1}$ if and only if $a_{1,d}=0$ and
\begin{equation*}
 \forall k\in\{0,\dots,d-1\}, \, a_{1,k}=0 \qquad {\text{or}} \qquad a_{2,d}=0.
\end{equation*}
We see indeed that these conditions are very different from the one of $\caJ^{\rm lin}_{2,d}$. Now, let us show that these polynomials $F\in\caJ^{\rm lin}_{2,d;1}$ are also in $\caJ_{2,d;1}$, so $(F^{-1})_1(z_1,0)$ is polynomial in $z_1$.

The first case of $\caJ^{\rm lin}_{2,d;1}$ corresponds to
\begin{equation*}
F_1(z)=z_1,\qquad F_2(z)=z_2-a_{2,d}z_1^d.
\end{equation*}
Here, the global inverse is
\begin{equation*}
F^{-1}_1(y)=y_1,\qquad F_2(y)=y_2+a_{2,d}y_1^d,
\end{equation*}
so the condition of $\caJ_{2,d;1}$ is trivially satisfied. The second case coincides with polynomials
\begin{equation*}
F_1(z)=z_1-\sum_{k=0}^{d-1}a_{1,k}z_1^kz_2^{d-k},\qquad F_2(z)=z_2.
\end{equation*}
The global inverse is not polynomial in $y_1,y_2$. However, by setting $u=(F^{-1})_1(y_1,0)$, we have the following equation $y_1=F_1(u,0)=u$, so
\begin{equation*}
(F^{-1})_1(y_1,0)=y_1
\end{equation*}
is polynomial in $y_1$, and $F\in \caJ_{2,d;1}$.

\section{Concluding remarks and perspectives}

We thus proved in this paper a reduction theorem to the quadratic case for the Jacobian conjecture, up to the addition of a new parameter $n'$. Moreover, we did this first by using formal algebraic methods and then using QFT methods. This idea of using intermediate field method represents an illustration of how QFT methods can be successfully used to prove ``purely'' mathematical results.

Recall here that the Jacobian Conjecture is proved in the quadratic case by Wang in \cite{Wang}. The immediate perspective thus appears to be the adaptation of Wang's proof to our particular case, where the parameter $n'$ plays a non-trivial role. An interesting approach for this may be the reformulation of Wang's proof in a QFT language, since we saw here that reduction results can be established in a natural way when using QFT techniques.

Let us end this paper by recalling that the Jacobian Conjecture 
is stably equivalent to the Dixmier Conjecture
%\cite{Belov:2008}
for endomorphisms of the Weyl algebra.  It should be interesting to
revisit the Dixmier Conjecture from the perspective of Noncommutative
QFT (see \cite{Grosse:2004yu,GMRT, dGT, deGoursac:2012ki} and references within)
%Tanasa:2007xa,
on the deformation quantization of the complex plane, which is an
extension of the Weyl algebra (see \cite{Omori:2000,Garay:2013gya}).
\bigskip

\noindent
{\bf{Acknowledgements:}}
\label{sec:ack}
A. Tanasa warmly thanks V. Rivasseau for bringing his attention to the Jacobian Conjecture in general and to the Abdesselam-Rivasseau combinatorial QFT model 
in particular; several discussions with J. Magnen at an early stage of this work are also warmly acknowledged. A. Tanasa was partially supported by the PN  09370102 and ANR JCJC CombPhysMat2Tens grants. A. de Goursac was partially supported by the ANR JCJC CombPhysMat2Tens grant and by the Belgian Interuniversity Attraction Pole (IAP) within the framework ``Dynamics, Geometry and Statistical Physics'' (DYGEST).

\nocite{*}
%\bibliographystyle{abbrvnat}
% use the following instead if you encounter problems 
\bibliographystyle{alpha}
\bibliography{jacobiana}

\begin{thebibliography}{OMMY00}

\bibitem[Abd03a]{Abdesselam0}
A.~Abdesselam.
\newblock The {J}acobian {C}onjecture as a {P}roblem of {P}erturbative
  {Q}uantum {F}ield {T}heory.
\newblock {\em Annales Henri Poincar\'e}, 4:199--215, 2003.

\bibitem[Abd03b]{malek-SLC}
Abdelmalek Abdesselam.
\newblock {Feynman diagrams in algebraic combinatorics}.
\newblock {\em S\'eminaire Lotharingien de Combinatoire}, B49c, 2003.

\bibitem[BCW82]{BassCW}
C.~W. Bass, E.~H. Connell, and D.~Wright.
\newblock The {J}acobian conjecture: reduction of degree and formal expansion
  of the inverse.
\newblock {\em Bull. Amer. Math. Soc.}, 7(2):287--330, 1982.

\bibitem[BKK07]{Belov:2008}
Alexei Belov-Kanel and Maxim Kontsevich.
\newblock {The Jacobian Conjecture is stably equivalent to the Dixmier
  Conjecture}.
\newblock {\em Moscow Math. J.}, 7:209--218, 2007.

\bibitem[CM08]{book-cm}
Alain Connes and Matilde Marcolli.
\newblock {\em Noncommutative Geometry, Quantum Fields and Motives}.
\newblock Hindustand Book Agency, 2008.
\newblock AMS Colloquium Publications vol. 55.

\bibitem[dG13]{deGoursac:2012ki}
Axel de~Goursac.
\newblock {Renormalization of the commutative scalar theory with harmonic term
  to all orders}.
\newblock {\em Ann. H. Poincare}, 14:2025--2043, 2013.

\bibitem[dGTW08]{dGT}
Axel de~Goursac, A.~Tanasa, and J.C. Wallet.
\newblock {Vacuum configurations for renormalizable non-commutative scalar
  models}.
\newblock {\em Eur.Phys.J.}, C53:459--466, 2008.

\bibitem[GdGvS14]{Garay:2013gya}
Mauricio Garay, Axel de~Goursac, and Duco van Straten.
\newblock {Resurgent Deformation Quantisation}.
\newblock {\em Ann. Phys.}, 342:83--102, 2014.

\bibitem[GMRT09]{GMRT}
R.~Gurau, J.~Magnen, V.~Rivasseau, and A.~Tanasa.
\newblock {A Translation-invariant renormalizable non-commutative scalar
  model}.
\newblock {\em Commun.Math.Phys.}, 287:275--290, 2009.

\bibitem[GW05]{Grosse:2004yu}
Harald Grosse and Raimar Wulkenhaar.
\newblock {Renormalisation of phi**4 theory on noncommutative R**4 in the
  matrix base}.
\newblock {\em Commun. Math. Phys.}, 256:305--374, 2005.

\bibitem[Kel39]{Keller}
O.~H. Keller.
\newblock Ganze {C}remona {T}ransformations.
\newblock {\em Monats. Math. Phys.}, 47:299--306, 1939.

\bibitem[Oda80]{Oda}
S.~Oda.
\newblock The {J}acobian problem and the simply-connectedness of
  ${\mathbf{a}}^n$ over a field $k$ of characteristic zero.
\newblock Osaka University preprint, 1980.

\bibitem[OMMY00]{Omori:2000}
H.~Omori, Y.~Maeda, N.~Miyazaki, and A.~Yoshioka.
\newblock {Anomalous quadratic exponentials in the star-products}.
\newblock {\em RIMS Kokyuroku}, 1150:128--132, 2000.

\bibitem[Sin01]{singer1}
D.~Singer.
\newblock On the catalan trees and the jacobian conjecture.
\newblock {\em Electronic Journal of Combinatorics}, 8, 2001.

\bibitem[Sin11]{singer2}
D.~Singer.
\newblock Toward a combinatorial proof of the jacobian conjecture!
\newblock {\em Electronic Journal of Combinatorics}, 18, 2011.

\bibitem[Tan12]{adrian-SLC}
Adrian Tanasa.
\newblock {Some combinatorial aspects of quantum field theory}.
\newblock {\em S\'eminaire Lotharingien de Combinatoire}, B65g, 2012.

\bibitem[Wan80]{Wang}
S.~Wang.
\newblock A {J}acobian criterion for separability.
\newblock {\em Journal of Algebra}, 65:453--494, 1980.

\bibitem[Wri89]{Wright2}
D.~Wright.
\newblock The tree formulas for reversion of power series.
\newblock {\em Journal of Pure and Applied Algebra}, 57(2):191--211, 1989.

\bibitem[Wri99]{wright}
D.~Wright.
\newblock Reversion, trees, and the {J}acobian conjecture.
\newblock {\em Contemp. Math., Combinatorial and computational algebra},
  264:249--267, 1999.
\newblock Amer. Math. Soc., Providence.

\bibitem[Wri06]{w2}
D.~Wright.
\newblock The {J}acobian {C}onjecture as a problem in combinatorics.
\newblock {\em in the monograph "Affine Algebraic Geometry", in honour of
  Masayoshi Miyanishi (Osaka Univ. Press)}, 2006.

\bibitem[Zei87]{zeil}
D.~Zeilberger.
\newblock Toward a combinatorial proof of the jacobian conjecture?
\newblock {\em Notes in Math.}, 1234:370--380, 1987.
\newblock Proc. of the "Colloque de combinatoire enumerative", G.Labelle and
  P.Leroux, editors.

\end{thebibliography}
\label{sec:biblio}

\bigskip

\noindent
%Eric Fusy\\
Axel de Goursac\\
{\it\small Charg\'e de Recherche FNRS, Univ. Catholique de Louvain, Belgium}

\medskip
\noindent
Andrea Sportiello\\
{\it\small LIPN, Institut Galil\'ee, CNRS UMR 7030, \\
Universit\'e Paris 13, Sorbonne Paris Cit\'e,
Villetaneuse, France}

\medskip
\noindent
Adrian Tanasa\\
{\it\small LIPN, Institut Galil\'ee, CNRS UMR 7030, \\
Universit\'e Paris 13, Sorbonne Paris Cit\'e,
Villetaneuse, France\\
Horia Hulubei National Institute for Physics and Nuclear Engineering,
Magurele, Romania}

\end{document}